\numberwithin{figure}{section}
\newcommand{\bbC}{{\mathbb{C}}}
\newcommand{\bbR}{{\mathbb{R}}}
\newcommand{\bbZ}{{\mathbb{Z}}}
\newcommand{\calJ}{{\mathcal J}}
\newcommand{\Arg}{\text{\rm{Arg}}}
\newcommand{\rank}{\text{\rm{rank}}}
\newcommand{\ran}{\text{\rm{Ran}}}
\newcommand{\sgn}{\text{\rm{sgn}}}
\newcommand{\beq}{\begin{equation}}
\newcommand{\eeq}{\end{equation}}
\newcommand{\ba}{\begin{align*}}
\newcommand{\ea}{\end{align*}}
\DeclareMathOperator{\re}{Re}
\DeclareMathOperator{\im}{Im}
\newcommand{\R}{\mathbb R}
\numberwithin{equation}{section}
\newtheorem{theorem}{Theorem}
\newtheorem{lemma}[theorem]{Lemma}
\newtheorem{corollary}[theorem]{Corollary}
\newtheorem{proposition}[theorem]{Proposition}
\theoremstyle{definition}
\theoremstyle{remark}
\newtheorem{remark}[theorem]{Remark}
\numberwithin{theorem}{section}
\begin{document}

\title[A generalized Hermite-Biehler theorem]{A generalized Hermite--Biehler theorem}

\author[R.~Kozhan]{Rostyslav Kozhan}
			
\address
{Department of Mathematics, Uppsala University, Uppsala, Sweden}
		
\email{rostyslav.kozhan@math.uu.se}

\author[M.~Tyaglov]{Mikhail Tyaglov}
			
\address
{School of Mathematical Sciences and MOE-LSC, Shanghai Jiao Tong University, Shanghai, P.R.China}
\email{tyaglov@sjtu.edu.cn}

\date{\today}
		
\subjclass[2010]{Primary 26C10, 47A55, 47B36; Secondary 26C05, 15B05}
			
\keywords{Hermite-Biehler theorem, Jacobi matrices, root location, rank-one perturbations, spectrum}
			
\begin{abstract}
The classical Hermite--Biehler theorem describes possible zero sets of complex linear combinations of two real polynomials whose zeros strictly interlace. We provide the full characterization of zero sets for the case when this interlacing is broken at exactly one location.

Using this we solve the direct and inverse spectral problem for rank-one multiplicative perturbations of finite Hermitian matrices. We also treat certain rank two additive perturbations of finite Jacobi matrices.
\end{abstract}
\maketitle

\section{Introduction}

Consider a complex linear combination
\begin{equation}\label{eq:genLin}
\alpha p(z) + \beta q(z), \quad \alpha,\beta \in\bbC\setminus \bbR
\end{equation}
of two real polynomials $p$ and $q$.
We say that zeros of two polynomials strictly interlace if all the zeros are real, simple, and between two consecutive zeros of one polynomial there lies a unique zero of the other polynomial.
The classical Hermite--Biehler theorem, which goes back to 1879~\cite{Biehler,Hermite}, says that zeros of $p$ and $q$ strictly interlace if and only if all zeros of~\eqref{eq:genLin} simultaneously belong to the upper or lower complex half-plane. This result has numerous applications both in pure and applied mathematics, most notably in control theory.

We want to classify possible zero configurations of~\eqref{eq:genLin} for the case when zeros of $p$ and $q$ ``almost'' interlace, that is, if we add one point to the zero set of $q$ then it will interlace the zero set of $p$.  The precise statement is in Theorems~\ref{thm:HBextendedNEW} and~\ref{rem:around0}.

Motivation comes from the spectral problem for rank-one {\it multiplicative non-Hermitian} perturbations of Hermitian and/or Jacobi matrices (see Section~\ref{ss:multipl}). Such perturbations were recently considered in random matrix theory~\cite{ORW,AlpK2}.
Certain rank-two additive non-Hermitian perturbations also lead to the polynomials of this type, see Section~\ref{ss:ranktwo}.

\section{Hermite-Bieheler theorem}

Let
\begin{equation}\label{main.poly}
h(z)=\prod\limits_{j=1}^{n}(z-z_j)
\end{equation}
be a monic complex polynomial. In the complex plane~$\mathbb{C}$, consider
the curve $\Gamma_h\equiv\left\{\,h(t)\,\colon\,t\in\mathbb{R}\,\right\}$
and specify the direction on it, which corresponds to increasing of
parameter~$t$. This oriented curve is called (Mikhailov--Nyquist) {\it hodograph} of the
polynomial $h$, see. e.g.,~\cite{Barkovsky,Postnikov}. Let us assume that polynomial $h$ has
no roots on the real axis. In that case, $\Gamma_h$ does not go through the origin and the function
\begin{equation}\label{eq:a}
\varphi_h(t):= \sum_{j=1}^n \Arg (t-z_j), \quad t\in\bbR,
\end{equation}
where $\Arg\,z\in[-\pi,\pi)$ is the principal value of the argument of $z\in\bbC\setminus\{0\}$,
is {\it continuous} at each point of the real axis. It is clear that $\varphi_h(t)$
is a branch of $\arg h(t)$, $t\in\bbR$. 
Let us introduce
the increment
\begin{equation}\label{eq6}
\Delta_h:=\bigl.\varphi_h\bigr|_{-\infty}^{+\infty}.
\end{equation}
The existence of the limits will be shown below.

%

The following theorem is usually attributed to
C.\,Hermite. Let us denote the open upper half-plane by $\bbC_+ = \{z\in\bbC: \im z>0\}$ and the lower open half-plane by $\bbC_- = \{z\in\bbC: \im z<0\}$.
\begin{theorem}[Hermite]\label{thm2.1}
If the polynomial $h$ has $n_{+}$ roots in $\bbC_+$, $n_{-}$ roots
in $\bbC_-$, and no roots on the real axis, then
\begin{equation*}
\Delta_h=\pi(n_{+}-n_{-}).
\end{equation*}
\end{theorem}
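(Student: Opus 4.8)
The plan is to exploit the additivity of $\varphi_h$ with respect to the roots: since $\varphi_h(t)=\sum_{j=1}^{n}\psi_j(t)$ with $\psi_j(t):=\Arg(t-z_j)$, it suffices to analyze one factor and then sum. Write $z_j=a_j+ib_j$; by hypothesis $b_j\neq 0$ for every $j$, so $t-z_j=(t-a_j)-ib_j$ has imaginary part $-b_j$ of constant, nonzero sign as $t$ runs over $\bbR$. Thus $t-z_j$ stays in the open lower half-plane when $z_j\in\bbC_+$ and in the open upper half-plane when $z_j\in\bbC_-$; in either case it never meets the branch cut $(-\infty,0]$ of $\Arg$, so each $\psi_j$ is continuous on $\bbR$ (which is, in particular, the continuity of $\varphi_h$ promised in the text).

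Next I would compute the one-sided limits, which at the same time establishes that they exist. As $t\to+\infty$ the point $t-z_j$ tends to $+\infty$ while remaining strictly off the real axis, so $\psi_j(t)\to 0$ regardless of the sign of $b_j$. As $t\to-\infty$ the point $t-z_j$ approaches the negative real axis; it does so from below when $b_j>0$, giving $\psi_j(t)\to-\pi$, and from above when $b_j<0$, giving $\psi_j(t)\to\pi$ (here the normalization $\Arg\in[-\pi,\pi)$ enters only in that the value $\pi$ is a limit, not an attained value). Hence the increment $\bigl.\psi_j\bigr|_{-\infty}^{+\infty}$ equals $+\pi$ if $z_j\in\bbC_+$ and $-\pi$ if $z_j\in\bbC_-$. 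Summing over $j$ and using that every root lies in $\bbC_+\cup\bbC_-$ (so that $n=n_++n_-$) yields that the limits in~\eqref{eq6} exist and $\Delta_h=\pi n_+-\pi n_-=\pi(n_+-n_-)$.

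The argument is elementary; the only delicate point is the bookkeeping of the argument near $-\infty$, namely distinguishing the limit $-\pi$ (approach through the lower half-plane) from $+\pi$ (approach through the upper half-plane), which is precisely what encodes $n_+$ versus $n_-$. As a cross-check one can instead invoke the argument principle: closing the real axis by a large semicircle in $\bbC_-$ traversed clockwise, the semicircle contributes a change of $\arg h$ of $-n\pi$ since $h(z)\sim z^n$ there, while the full contour contributes $-2\pi n_-$; solving for $\Delta_h$ and using $n=n_++n_-$ gives the same identity. I would nonetheless prefer the direct term-by-term route, since it simultaneously delivers the existence of $\Delta_h$.
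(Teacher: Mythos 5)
Your proposal is correct and follows essentially the same route as the paper: both decompose $\varphi_h$ over the linear factors $z-z_j$, observe that each factor contributes an argument increment of $\pi\,\sgn\im z_j$ (the paper phrases this via the hodograph of $z-z_j$ being a horizontal line, you via explicit one-sided limits of $\Arg(t-z_j)$), and sum. Your explicit computation of the limits at $\pm\infty$ and the continuity of each $\psi_j$ is a slightly more careful write-up of the same argument, and the argument-principle cross-check is a nice but inessential addition.
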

\begin{proof}
Let $h_j(z):=z-z_j$. Then $\varphi_{h_j}(t)=\Arg (t-z_j)$. It is easy to see that the
hodograph~$\Gamma_{h_j}$ of~$h_j$ is a horizontal line traversed from left to right that
intersects the imaginary axis at the point $-i\im z_j$. Thus, as $t$ runs from $-\infty$
to $\infty$, the radius-vector of a point on the hodograph makes a counterclockwise turn
of magnitude $\pi$ if $\im z_j>0$ (clockwise if $\im z_j<0$), that is,
$\Delta_{h_j}=\pi\,\sgn\im z_j$. Now from~\eqref{eq:a}--\eqref{eq6} we obtain
\begin{equation*}
\Delta_h=\sum\limits_{j=1}^n\Delta_{h_j}=\pi\sum\limits_{j=1}^n\sgn\im\lambda_j=\pi(n_{+}-n_{-}).
\end{equation*}
\end{proof}
\noindent From the proof of the theorem we obtain that the number $\Delta_h$ is well defined, and
\begin{equation}\label{limits}
\lim\limits_{t\to-\infty}\varphi_h(t)
=-\Delta_h
=\pi(n_{-}-n_{+}),\quad \lim\limits_{t\to+\infty}\varphi_h(t)=0.
\end{equation}

\vspace{3mm}

Let us consider the extremal case when all roots of the polynomial $h$ lie in $\bbC_+$.
In this case, the polynomial $h$ has remarkable properties. In particular, from Theorem~\ref{thm2.1}
we immediately have the following consequences.
\begin{corollary}\label{corol.HB}
Let $h$ be as in~\eqref{main.poly} with $z_j\notin\bbR$ for all $j$. Then $h$ has all roots in $\bbC_+$ if and only if $\Delta_h=\pi n$.
\end{corollary}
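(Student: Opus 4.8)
The plan is to read off the claim directly from Theorem~\ref{thm2.1}, since Corollary~\ref{corol.HB} is precisely the extremal case $n_-=0$ of that theorem. Under the standing hypothesis that $h$ has no roots on the real axis, its $n$ roots split as $n = n_{+} + n_{-}$, where $n_{\pm}$ counts the roots of $h$ in $\bbC_{\pm}$. Theorem~\ref{thm2.1} then gives $\Delta_h = \pi(n_{+} - n_{-})$.

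For the forward implication I would argue: if all roots of $h$ lie in $\bbC_+$, then $n_- = 0$ and $n_+ = n$, so Theorem~\ref{thm2.1} yields $\Delta_h = \pi(n_+ - n_-) = \pi n$ immediately. For the converse, assuming $\Delta_h = \pi n$ and combining with $\Delta_h = \pi(n_+ - n_-)$ gives $n_+ - n_- = n = n_+ + n_-$, hence $n_- = 0$; thus $h$ has no roots in $\bbC_-$, and since by hypothesis it also has none on $\bbR$, all $n$ roots lie in $\bbC_+$.

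There is essentially no obstacle: the statement is a one-line consequence of Theorem~\ref{thm2.1} together with the trivial nonnegativity $n_- \geq 0$. The only point requiring a word of care is that the decomposition $n = n_+ + n_-$ relies on the assumption $z_j \notin \bbR$ for all $j$; without it the equivalence breaks down. (One could also record the mirror statement, using $n_+ \geq 0$, that $\Delta_h = -\pi n$ characterizes the case of all roots lying in $\bbC_-$, but this is not needed here.)
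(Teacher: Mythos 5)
Your proposal is correct and is exactly the argument the paper intends: the corollary is stated as an immediate consequence of Theorem~\ref{thm2.1}, obtained by combining $\Delta_h=\pi(n_+-n_-)$ with $n=n_++n_-$ and $n_-\geq 0$. Nothing further is needed.
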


\begin{lemma}[Hermite]\label{Lemma.HB.poly}
If all the zeros of $h$ lie in $\bbC_+$, then $\varphi_h(t)$
is a monotone (strictly) increasing function on $\mathbb{R}$.
\end{lemma}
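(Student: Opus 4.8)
The plan is to prove monotonicity by differentiating $\varphi_h$ term by term. Since $\varphi_h(t) = \sum_{j=1}^n \Arg(t-z_j)$ and $h$ has no real zeros, each summand is smooth on $\bbR$, so it suffices to show that each $\frac{d}{dt}\Arg(t-z_j) > 0$ when $z_j \in \bbC_+$. Writing $z_j = x_j + i y_j$ with $y_j > 0$, we have $t - z_j = (t-x_j) - i y_j$, and for the principal branch $\frac{d}{dt}\Arg(t - z_j) = \im \frac{d}{dt}\Log(t-z_j) = \im \frac{1}{t - z_j} = \frac{y_j}{(t-x_j)^2 + y_j^2}$.

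Thus $\varphi_h'(t) = \sum_{j=1}^n \frac{y_j}{(t-x_j)^2 + y_j^2}$, and if all $y_j > 0$ this is a sum of strictly positive terms, hence strictly positive on all of $\bbR$. This gives strict monotonicity immediately. I would present it essentially in this form: first reduce to a single factor via the additivity $\varphi_h = \sum \varphi_{h_j}$ already noted in~\eqref{eq:a}, then do the elementary derivative computation for $\Arg(t - z_j)$, then sum.

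The one point requiring a word of care is the branch issue: $\Arg$ takes values in $[-\pi,\pi)$ and is discontinuous along the negative real axis, so a priori $\frac{d}{dt}\Arg(t-z_j)$ must be checked not to see that discontinuity. But since $\im(t - z_j) = -y_j < 0$ for all real $t$, the point $t - z_j$ stays strictly in the open lower half-plane, which avoids the branch cut entirely; on that region $\Arg$ agrees with a holomorphic branch of $\log$, so the derivative computation is legitimate. This is really the only obstacle, and it is a minor one. Alternatively, one can bypass $\Arg$ altogether and argue geometrically from the proof of Theorem~\ref{thm2.1}: the hodograph $\Gamma_{h_j}$ is a horizontal line lying strictly below the real axis (since it meets the imaginary axis at $-i y_j$ with $y_j > 0$), traversed left to right, so the radius vector from the origin turns monotonically counterclockwise; superposing $n$ such monotone turns (and using that the total net turn is $\pi n$ by Corollary~\ref{corol.HB}) again yields strict monotonicity. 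I would include the analytic computation as the main argument since it is the cleanest and makes the strictness transparent.
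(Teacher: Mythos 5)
Your proof is correct and essentially identical to the paper's: both decompose $\varphi_h=\sum_j\varphi_{h_j}$ via~\eqref{eq:a} and compute $\varphi_{h_j}'(t)=\im z_j/\bigl((t-\re z_j)^2+(\im z_j)^2\bigr)>0$. Your extra remark that $t-z_j$ stays in the open lower half-plane, so the principal branch never meets its cut, is a worthwhile point of care that the paper leaves implicit, but it does not change the argument.
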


\begin{proof}
Indeed, for $h_j(z)=z-z_j$, with $\im z_j>0$, one has
\begin{equation*}
\dfrac{d\varphi_{h_j}(t)}{dt}=\dfrac{\im z_j}{(t-\re z_j)^2+(\im z_j)^2}>0.
\end{equation*}
Thus, $\varphi_{h_j}(t)$ is strictly increasing function on $\mathbb{R}$. Then so is
the function $\varphi_h=\sum \varphi_{h_j}$, see~\eqref{eq:a}.
\end{proof}

As was independently established by C.\,Hermite~\cite{Hermite} and M.\,Biehler~\cite{Biehler} in 1879,
such a property of the polynomial $h$ can be restated in terms of the root locations of the real and imaginary parts
of $h$. Namely, after normalization we can express the polynomial $h$~\eqref{main.poly} as
\begin{equation}\label{eq:nonHermPoly}
h(z)=p(z) - il q(z),
\end{equation}
where $p$ and $q$ are monic real polynomials with $\deg p=n$, $\deg q\leqslant n-1$, and $l\in\bbR$.

Then the following theorem holds whose brief proof we provide for completeness purposes.

\begin{theorem}[Hermite--Biehler theorem \cite{Hermite,Biehler}]\label{thm:HB}
Let a polynomial $h$ be given by~\eqref{eq:nonHermPoly} with $p$ and $q$ monic and real, $\deg p=n$, $\deg q\leqslant n-1$, $l\in\bbR$.
Then all zeros of $h$ belong to  $\bbC_+$ if and only if $l>0$ and zeros of
$p$ and $q$
strictly interlace.
\end{theorem}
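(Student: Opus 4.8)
The plan is to prove the two implications separately, leaning on the hodograph results already established (Theorem~\ref{thm2.1}, Corollary~\ref{corol.HB}, Lemma~\ref{Lemma.HB.poly}) for the ``only if'' direction, and on a partial-fraction identity for the ``if'' direction.

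For the forward implication I would assume all zeros of $h$ lie in $\bbC_+$ and first record that $l\neq0$, since otherwise $h=p$ would be a nonconstant real polynomial, whose zeros occur in conjugate pairs and so cannot all lie in $\bbC_+$. Then Corollary~\ref{corol.HB} gives $\Delta_h=\pi n$, and combined with~\eqref{limits} and Lemma~\ref{Lemma.HB.poly} this makes $\varphi_h$ a continuous strictly increasing bijection $\bbR\to(-\pi n,0)$. The decisive remark is that $h(t)$ is purely imaginary and nonzero exactly when $p(t)=\re h(t)=0$, i.e.\ when $\varphi_h(t)\in\tfrac\pi2+\pi\bbZ$, and $h(t)$ is real and nonzero exactly when $q(t)=0$ (here $l\neq0$ is used), i.e.\ when $\varphi_h(t)\in\pi\bbZ$. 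Because $\varphi_h$ is a bijection onto $(-\pi n,0)$, the first event occurs at exactly $n$ points (the preimages of $-\tfrac\pi2,-\tfrac{3\pi}2,\dots,-\tfrac{(2n-1)\pi}2$) and the second at exactly $n-1$ points (the preimages of $-\pi,-2\pi,\dots,-(n-1)\pi$); since $\deg p=n$ and $\deg q\leq n-1$ this exhausts the zeros, so $p$ has $n$ simple real zeros, $q$ has $n-1$ simple real zeros, $\deg q=n-1$, and the alternation of the two arithmetic progressions of $\varphi_h$-values yields strict interlacing (with the largest and smallest zeros belonging to $p$). To pin down the sign of $l$, I would evaluate at the largest zero $t_1$ of $p$: there $\varphi_h(t_1)=-\tfrac\pi2$, so $h(t_1)=-il\,q(t_1)$ sits on the negative imaginary axis; as $t_1$ exceeds every zero of the monic polynomial $q$ we have $q(t_1)>0$, forcing $l>0$.

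For the converse I would assume $l>0$ and that the zeros of $p$ and $q$ strictly interlace. Strict interlacing makes $p$ and $q$ coprime and, together with $\deg p=n$ and $\deg q\leq n-1$, forces $\deg q=n-1$ with all zeros real and simple; write the zeros of $p$ as $t_1>\cdots>t_n$. The expansion $q(z)/p(z)=\sum_{k=1}^n c_k/(z-t_k)$ (no polynomial part since $\deg q<\deg p$) has coefficients $c_k=q(t_k)/p'(t_k)$, and a short sign count from the interlacing shows that $p'(t_k)$ and $q(t_k)$ have the same sign $(-1)^{k-1}$, hence $c_k>0$ for every $k$. Now if $h(z_0)=0$ with $\im z_0\leq0$, then $z_0\notin\bbR$ (coprimeness), so in fact $\im z_0<0$ and $q(z_0)\neq0$; from $p(z_0)=il\,q(z_0)$ we get $q(z_0)/p(z_0)=-i/l$, which has imaginary part $-1/l<0$, whereas
\[
\im\frac{q(z_0)}{p(z_0)}=\sum_{k=1}^n c_k\,\im\frac1{z_0-t_k}=\sum_{k=1}^n c_k\,\frac{-\im z_0}{\abs{z_0-t_k}^2}>0,
\]
a contradiction. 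Thus $h$ has no zeros in $\overline{\bbC_-}$, i.e.\ all zeros lie in $\bbC_+$.

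I expect the main obstacle to be the bookkeeping in the forward direction: one must verify that every real zero of $p$ and of $q$ is accounted for by (and only by) the appropriate level set of $\varphi_h$, with no zeros lost or double-counted, and then read off simplicity, the exact degree of $q$, and the interlacing pattern. In the converse the only subtle point is the positivity $c_k>0$, which is exactly the place where strict interlacing enters. (If one prefers to stay inside the hodograph picture, the converse can also be obtained by retracing $\Gamma_h$ through the sign regions of $p$ and $q$ to compute $\Delta_h=\pi n$ and applying Corollary~\ref{corol.HB}; the residue computation above is simply shorter.)
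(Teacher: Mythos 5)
Your argument is correct. The forward direction is essentially the paper's proof: both use Corollary~\ref{corol.HB}, \eqref{limits} and Lemma~\ref{Lemma.HB.poly} to make $\varphi_h$ a strictly increasing bijection onto $(-\pi n,0)$ and then read off the zeros of $p$ and $q$ as the preimages of $\tfrac{\pi}{2}+\pi\bbZ$ and $\pi\bbZ$; your way of getting $l>0$ (evaluating $h$ at the largest zero of $p$, where $h$ lands on the negative imaginary axis) differs only cosmetically from the paper's identity $l=\sum_j \im z_j$. The converse is where you genuinely diverge. The paper stays inside the hodograph picture: it shows $\varphi_h'(t)=l\,[p'(t)q(t)-p(t)q'(t)]/|h(t)|^2>0$ using the Wronskian inequality from \cite{Holtz_Tyaglov}, counts the crossings to conclude $\Delta_h=\pi n$, and invokes Corollary~\ref{corol.HB}. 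You instead expand $q/p=\sum_k c_k/(z-t_k)$, deduce $c_k>0$ from interlacing, and observe that $\im\bigl(q(z_0)/p(z_0)\bigr)>0$ whenever $\im z_0<0$, which is incompatible with $q(z_0)/p(z_0)=-i/l$ for $l>0$. The two are close cousins --- positivity of the residues $c_k=q(t_k)/p'(t_k)$ encodes the same information as $p'q-pq'>0$ --- but your Herglotz-type argument is shorter and self-contained (no external citation needed), whereas the paper's level-set bookkeeping is the version that scales to the broken-interlacing Theorems~\ref{thm:HBextendedNEW} and~\ref{rem:around0}, where $\varphi_h$ must be tracked through the values $A_1+\pi\bbZ$ and $A_2+\pi\bbZ$ and a clean residue expansion is no longer available.
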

\begin{proof}
Let all the zeros of $h$ lie in $\bbC_+$. Then by~\eqref{limits}
and  Lemma~\ref{Lemma.HB.poly}, $\varphi_h(t)$ is a strictly monotone increasing function
with $\lim\limits_{t\to-\infty}\varphi_h(t)=-\pi n$ and $\lim\limits_{t\to+\infty}\varphi_h(t)=0$. Therefore, there are
exactly $n$ real values $\{\lambda_j\}_{j=1}^n$ with $\varphi_h(\lambda_j)=\tfrac\pi2 +k\pi$
and $n-1$ real values $\{\mu_j\}_{j=1}^{n-1}$ with $\varphi_h(\mu_j)=k\pi$, and these two
sets strictly interlace.  From~\eqref{eq:nonHermPoly} it follows that $p(\lambda_j)=0$ and
$q(\mu_j)=0$ for each $j$. Finally, since $l = \sum\limits_{j=1}^n \im z_j$ we get $l>0$.


Conversely, suppose $p(\lambda_j)=0$ and $q(\mu_j)=0$, where $\{\lambda_j\}_{j=1}^n$ and
$\{\mu_j\}_{j=1}^{n-1}$ strictly interlace, so that $\lambda_j<\mu_j<\lambda_{j+1}$.
Notice that $z_j\notin\bbR$ for any $j$, otherwise, $p(z_j) = q(z_j)=0$ contrary to strict
interlacing, so that $\varphi_h(t)$ is well defined for all $t\in\bbR$.
It is not hard to see that $p'(t)q(t)-p(t)q'(t)>0$ (see, e.g.,~\cite[Thm~3.4]{Holtz_Tyaglov}).
Since $\varphi_h(t)$ is a branch of $\arg h(t)$ we get 
$$
\varphi_h'(t) = \frac{d}{dt} \left(\arctan \frac{-lq(t)}{p(t)} \right)  = l\frac{p'(t)q(t)-p(t)q'(t)}{|h(t)|^2}>0
$$
for all $t\in\bbR$. 
%
%
%
%
Thus, $\varphi_h(t)$
is strictly increasing on $\mathbb{R}$, and intersects the imaginary axis
exactly~$n$ times at the points $\lambda_j$, $j=1,\ldots,n$, and the real axis
$n-1$ times at the points $\mu_j$, $j=1,\ldots,n-1$. This means that $\Delta_h=\pi n$,
so that all zeros of $h(z)$ lie in $\bbC_+$ according to Corollary~\ref{corol.HB}.
\end{proof}

\begin{remark}\label{rem:1-to-1}
Theorem~\ref{thm:HB} is just one step away from the full classification of complex spectrum of rank-one non-Hermitian perturbations of Jacobi matrices (or any Hermitian matrices with a cyclic vector). We elaborate in Section~\ref{ss:additive} below.
%
\end{remark}

\begin{remark}\label{Remark.HB.2}
The same proof shows that Theorem~\ref{thm:HB} holds if we replace~\eqref{eq:nonHermPoly} with $h(z)=p(z)+\beta q(z)$,
where $\beta$ with $\im\beta<0$ is fixed.
One can also prove an analogue of Theorem~\ref{thm:HB} in the case when
$\deg q=n$. We leave this as an exercise to the reader. 
\end{remark}

If one assumes that $p$ has $s$ negative and $n-s$ non-negative zeros  then {we} can say more on all of the possible locations of zeros of $h$ in~\eqref{eq:nonHermPoly}. This was discussed in~\cite[Sect 3]{K17} but we provide a more general statement and a more transparent proof here. This result is also related to the condition that enters in the case of broken interlacing, see~\eqref{eq:zerosNEW} and Remark~\ref{rem:allpositive} below.
	
\begin{proposition}\label{rem:HBlocalization}
Let the polynomial $h$ defined in~\eqref{eq:nonHermPoly} have all zeros $\{z_j\}_{j=1}^n$ in $\bbC_+$.
\begin{itemize}
\item[(i)] $p$ has $s$ negative zeros and $n-s$ positive ones if and only
if the zeros of $h$ satisfy the inequalities
\begin{equation}\label{eq:HBlocalization}
\frac\pi2+\pi(s-1) < \sum_{j=1}^n \Arg\, z_j < \frac\pi2 +\pi s.
\end{equation}
\item[(ii)] $p$ has $s$ negative zeros, $n-s-1$ positive zeros, and a zero at $t=0$
if and only if the zeros of $h$ satisfy the identity
\begin{equation}\label{eq:HBlocalization2}
\sum_{j=1}^n \Arg\, z_j = \frac\pi2 +\pi s.
\end{equation}
\end{itemize}
\end{proposition}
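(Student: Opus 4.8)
The plan is to run the same monotone--phase argument that underlies Theorem~\ref{thm:HB}. Since every zero of $h$ lies in $\bbC_+$, Lemma~\ref{Lemma.HB.poly} together with \eqref{limits} shows that $\varphi_h$ is continuous and strictly increasing on $\bbR$ with $\lim_{t\to-\infty}\varphi_h(t)=-\pi n$ and $\lim_{t\to+\infty}\varphi_h(t)=0$. From $h=p-ilq$ one sees that $p(t)=0$ exactly when $h(t)$ lies on the imaginary axis, i.e. when $\varphi_h(t)\equiv\tfrac\pi2\pmod\pi$; by strict monotonicity and the two boundary values, the zeros $\lambda_1<\dots<\lambda_n$ of $p$ are therefore characterized by
\[
\varphi_h(\lambda_j)=\tfrac\pi2+\pi(j-1-n),\qquad j=1,\dots,n.
\]

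Next I would compute $\varphi_h$ at the origin. For $z_j\in\bbC_+$ one has $\Arg(-z_j)=\Arg z_j-\pi$, so by \eqref{eq:a},
\[
\varphi_h(0)=\sum_{j=1}^n\Arg(-z_j)=\sum_{j=1}^n\Arg z_j-\pi n .
\]
Since $\varphi_h$ is strictly increasing, the number of negative zeros of $p$ equals the number of $j$ with $\varphi_h(\lambda_j)<\varphi_h(0)$, while $p(0)=0$ iff $\varphi_h(0)$ coincides with one of the values $\varphi_h(\lambda_j)$. Substituting the two displayed identities converts these into the arithmetic conditions in the statement: ``$p$ has exactly $s$ negative zeros and $p(0)\ne0$'' becomes $\tfrac\pi2+\pi(s-1)<\sum_{j}\Arg z_j<\tfrac\pi2+\pi s$, which is \eqref{eq:HBlocalization} (and then automatically the remaining $n-s$ zeros are positive); ``$p$ has $s$ negative zeros and $p(0)=0$'' becomes $\varphi_h(0)=\varphi_h(\lambda_{s+1})$, i.e. $\sum_{j}\Arg z_j=\tfrac\pi2+\pi s$, which is \eqref{eq:HBlocalization2} (and the other $n-s-1$ zeros are positive). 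Every implication here is reversible, so the ``only if'' directions come for free. The edge cases $s=0$ and $s=n$ are dealt with by reading $\varphi_h(\lambda_0)$ and $\varphi_h(\lambda_{n+1})$ as the limits $-\pi n$ and $0$; for instance when $s=n$ the upper inequality in \eqref{eq:HBlocalization} holds trivially because $\sum_{j}\Arg z_j<\pi n<\tfrac\pi2+\pi n$.

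I do not expect a genuine obstacle: the argument is a single chain of equivalences, and the only delicate point is fixing the normalization correctly---namely that the $j$-th smallest zero of $p$ sits at phase $\tfrac\pi2+\pi(j-1-n)$ and that $\varphi_h(0)=\sum_{j}\Arg z_j-\pi n$. Once these two facts are in place, comparing $\varphi_h(0)$ against the arithmetic ladder $\{\varphi_h(\lambda_j)\}_{j=1}^n$ yields parts (i) and (ii) immediately.
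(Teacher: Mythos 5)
Your argument is correct and is essentially the paper's own proof: both rely on the strict monotonicity of $\varphi_h$ (Lemma~\ref{Lemma.HB.poly}), the phase values $\varphi_h(\lambda_j)=-\tfrac{\pi}{2}-(n-j)\pi$ at the zeros of $p$, and the identity $\varphi_h(0)=\sum_j\Arg z_j-\pi n$, then compare $\varphi_h(0)$ against that ladder. Your normalizations agree with the paper's (your $\tfrac\pi2+\pi(j-1-n)$ equals $-\tfrac\pi2-(n-j)\pi$), so there is nothing to add.
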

\begin{proof}
If $\im z_j>0$, $j=1,\ldots,n$, then by Lemma~\ref{Lemma.HB.poly} and Theorem~\ref{thm:HB},
the zeros $\{\lambda_j\}_{j=1}^{n}$ of~$p$ are real and simple, and the function $\varphi_h(t)$
defined in~\eqref{eq:a} is strictly increasing on $\mathbb{R}$ with
$\lim\limits_{t\to-\infty}\varphi_h(t)=-\pi n$ and $\lim\limits_{t\to+\infty}\varphi_h(t)=0$.
Consequently, we have $\varphi_h(\lambda_j)=-\tfrac\pi2-(n-j)\pi$. It is clear now that
\begin{equation*}
-\tfrac\pi2-(n-s)\pi < \varphi_h(0)< -\tfrac\pi2-(n-s-1)\pi
\end{equation*}
if and only if $p$ has $s$ negative zeros and $n-s$ positive zeros. On the other hand,
\begin{equation}\label{eq:a(0)basic}
\varphi_h(0) = \sum_{j=1}^n \Arg(-z_j)
=  \sum_{j=1}^n   \Arg\, z_j  -\pi n.
\end{equation}
The result in (i) follows by combining these two statements.
	
For (ii) observe that $\lambda_{s+1} = 0$, which {holds if and only if}
$\varphi_h(0)=-\tfrac\pi2-(n-s-1)\pi$. Then we use~\eqref{eq:a(0)basic} to get~\eqref{eq:HBlocalization2}.
\end{proof}

\section{Generalized Hermite--Biehler theorem: broken interlacing at $t=0$}

As mentioned (see Section~\ref{ss:additive} for details), the Hermite--Biehler theorem enters naturally in the solution to the
inverse spectral problem for a non-Hermitian rank-one additive perturbation of
Jacobi and Hermitian matrices. 
However,  higher rank perturbations require analogues of the Hermite--Biehler
theorem where we no longer have  interlacing  of  zeros of the real and imaginary parts of a given polynomial.

In particular,
{\it multiplicative} rank-one perturbations
and additive rank-two perturbations (see Sections~\ref{ss:multipl} and~\ref{ss:ranktwo}) lead to  situations where  interlacing
of  zeros
is broken at
exactly one location. We therefore need to consider a complex linear combination of two such polynomials
and study its possible zero configurations.

The quantity that will enter naturally is the complex argument of zeros modulo $\pi$. Let us introduce the notation
\begin{equation*}\label{arg.mod.pi}
\Arg_{[0,\pi)} z :=
\begin{cases}
\Arg\,z\qquad&\text{if}\quad z\in\bbC_+,\\
0\qquad&\text{if}\quad z\in\R,\\
\pi+\Arg\,z\,\quad&\text{if}\quad z\in\bbC_-.\\
\end{cases}
\end{equation*}

Geometrically $\Arg_{[0,\pi)} z \in [0,\pi)$ measures the angle between the radius-vector of $z$  and the positive  half-axis $\bbR_+$ if $z\in\bbC_+$ and  the negative half-axis
 $\bbR_-$ if $z\in\bbC_-$. Algebraically one can see that
\begin{equation}\label{eq:cotan}
\Arg_{[0,\pi)} z
= \operatorname{arccot} \frac{\re z}{\im z} \in (0,\pi)
\end{equation}
for $z\in\bbC\setminus\bbR$.

The following theorem is the main result of the paper. It characterizes possible zero configurations of  linear combinations of two monic real polynomials
whose positive and negative zeros interlace except for the broken interlacing at the origin.

\begin{theorem}\label{thm:HBextendedNEW}
Let $p$ and $q$ be monic real polynomials, $\deg p= n$, $\deg q=n-1$, $p(0)\ne 0$, and $\alpha\in\bbC_+$.
Define  the
monic polynomial
\begin{equation}\label{eq:extHBpolyNEW}
h(z) = \alpha p(z) + (1-\alpha) z q(z) = \prod_{j=1}^n (z-z_j).
\end{equation}
If the zeros of $p$ and $q$ 
strictly interlace,  then  $h$ 	has all of its zeros $\{z_j\}_{j=1}^n$ in
\begin{equation}\label{eq:zerosNEW}
\left\{ \{z_j\}_{j=1}^n \in (\bbC\setminus\bbR)^n: \sum_{j=1}^n \Arg_{[0,\pi)} z_j =\Arg\,\alpha \right\}.
\end{equation}

Conversely, if  $\alpha\in\bbC_+$ and all the zeros of a monic complex polynomial $h(z) = \prod_{j=1}^n (z-z_j)$
satisfy~\eqref{eq:zerosNEW}, then there exists a unique pair of monic real polynomials $p$ and $q$ with strictly interlacing zeroes such that $h(z) = \alpha p(z) + (1-\alpha) z q(z) $.

The number of $z_j$'s in $\bbC_+$ (respectively, in $\bbC_-$) coincides with the
number of positive (respectively, negative) zeros of $p$.
%
\end{theorem}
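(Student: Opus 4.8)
The plan is to combine the continuous argument function $\varphi_h$ of \eqref{eq:a} with a homotopy in the parameter $\alpha$. Two preliminary observations make this possible. First, $h$ is monic of degree $n$: the leading terms $\alpha z^n$ of $\alpha p$ and $(1-\alpha)z^n$ of $(1-\alpha)zq$ add up to $z^n$. Second, $h$ has no real zeros: if $h(t_0)=0$ with $t_0\in\bbR$, then $\alpha\,[\,p(t_0)-t_0q(t_0)\,]=-t_0q(t_0)\in\bbR$, and since $\alpha\notin\bbR$ this forces $p(t_0)-t_0q(t_0)=0$ together with $t_0q(t_0)=0$, hence $p(t_0)=0$; but $p(0)\ne0$ and strict interlacing give $\gcd(p,q)=1$, a contradiction. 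So $\varphi_h$ is defined on all of $\bbR$. The only elementary identity I will need is
\[
\sum_{j=1}^n\Arg_{[0,\pi)}z_j=\varphi_h(0)+\pi n_+,\qquad n_+:=\#\{j:z_j\in\bbC_+\},
\]
which follows from $\varphi_h(0)=\sum_j\Arg(-z_j)$ by comparing $\Arg(-z)$ with $\Arg_{[0,\pi)}z$ separately on $\bbC_+$ and on $\bbC_-$. Since $h(0)=\alpha p(0)$ with $p(0)\in\bbR\setminus\{0\}$, the value $\varphi_h(0)$ is a branch of $\arg(\alpha p(0))$, so $\varphi_h(0)\equiv\Arg\alpha\pmod\pi$, and hence $\sum_j\Arg_{[0,\pi)}z_j\equiv\Arg\alpha\pmod\pi$ unconditionally. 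The real task of the forward direction is thus to select the correct branch and to pin down $n_+$.

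For the forward direction I would fix an interlacing pair $(p,q)$ with $p(0)\ne0$ and let $\alpha$ vary over $\bbC_+$. By the no real zeros fact, the zeros of $h_\alpha=\alpha p+(1-\alpha)zq$ stay in $\bbC\setminus\bbR$ and move continuously with $\alpha$; hence $n_+=n_+(\alpha)$ is locally constant, and (using the identity above) the $\pi\bbZ$-valued quantity $\varphi_h(0)+\pi n_+-\Arg\alpha$ is locally constant, while $\varphi_h(0)$ itself depends continuously on $\alpha$ (no $z_j$ ever meets $\bbR$, so no $-z_j$ meets the branch cut of $\Arg$). Both are therefore constant on the connected set $\bbC_+$, and I would evaluate them by letting $\alpha=1+i\varepsilon\to1$. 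There $h_\alpha=p+i\varepsilon R$ with $R:=p-zq$, so the zero near the simple root $\lambda_j$ of $p$ is $z_j=\lambda_j-i\varepsilon\,R(\lambda_j)/p'(\lambda_j)+O(\varepsilon^2)$; since $R(\lambda_j)=-\lambda_jq(\lambda_j)$ and $\sgn q(\lambda_j)=\sgn p'(\lambda_j)=(-1)^{n-j}$, one gets $\sgn\im z_j=\sgn\lambda_j$. Thus $z_j\to\lambda_j$ from $\bbC_+$ when $\lambda_j>0$ and from $\bbC_-$ when $\lambda_j<0$, which gives $n_+=\#\{j:\lambda_j>0\}$ (the last assertion of the theorem) together with $\varphi_h(0)=\sum_j\Arg(-z_j)\to-\pi n_+$, while $\Arg\alpha\to0$. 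Hence $\varphi_h(0)+\pi n_+-\Arg\alpha\equiv0$ on $\bbC_+$, i.e.\ $\sum_j\Arg_{[0,\pi)}z_j=\Arg\alpha$.

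For the converse I would start from $h=\prod(z-z_j)$ with $z_j\notin\bbR$ and $\sum_j\Arg_{[0,\pi)}z_j=\Arg\alpha$, writing $\bar h$ for the polynomial with conjugated coefficients. If $h=\alpha p+(1-\alpha)zq$ with $p,q$ real then also $\bar h=\bar\alpha p+(1-\bar\alpha)zq$, and solving this $2\times2$ system (determinant $\alpha-\bar\alpha=2i\im\alpha\ne0$) forces
\[
p=\frac{(1-\bar\alpha)h-(1-\alpha)\bar h}{\alpha-\bar\alpha},\qquad zq=\frac{\alpha\bar h-\bar\alpha h}{\alpha-\bar\alpha}.
\]
These are real and monic of degree $n$; the constant term of the second one vanishes precisely because $\arg h(0)\equiv\sum_j\Arg_{[0,\pi)}z_j=\Arg\alpha\pmod\pi$, so $q$ is a genuine monic polynomial of degree $n-1$, and one checks $p(0)=\im h(0)/\im\alpha\ne0$. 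This proves existence and uniqueness of the decomposition; what remains is that these $p,q$ have strictly interlacing zeros, and here I would argue by continuity. Put $s:=\#\{j:z_j\in\bbC_-\}$ and let $\mathcal{Z}_s$ be the set of monic degree $n$ polynomials with exactly $n-s$ zeros in $\bbC_+$, $s$ in $\bbC_-$, none real, and satisfying the angle condition of \eqref{eq:zerosNEW}; this set is path-connected, since such a polynomial is determined by free moduli, a fixed split of its zeros between $\bbC_+$ and $\bbC_-$, and arguments lying in the convex set $\{\vec\theta\in(0,\pi)^n:\sum_j\theta_j=\Arg\alpha\}$. The pair $(p,q)$ read off by the formulas above depends continuously on $h\in\mathcal{Z}_s$, ``strict interlacing of $p$ and $q$'' is an open condition on $h$, and it is also closed within $\mathcal{Z}_s$: in a limit where strict interlacing degenerates one has either $p(0)=0$, or two consecutive zeros of $p$ (or of $q$) collide and drag a zero of $q$ (resp.\ of $p$) with them, so $p$ and $q$ acquire a common zero -- at $0$ or at a nonzero real point -- and in every case $h=\alpha p+(1-\alpha)zq$ vanishes at a real point, contradicting $h\in\mathcal{Z}_s$. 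Since strict interlacing holds at the base point $h_0=\alpha p_0+(1-\alpha)zq_0$ obtained (via the forward direction) from any concrete interlacing pair $(p_0,q_0)$ with $s$ negative $p_0$-zeros, and such pairs exist for every $s$, it holds on all of the connected set $\mathcal{Z}_s$; the count assertion for the converse then follows from the forward direction at $h_0$ together with continuity of the zeros of $p$ along the deformation.

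The step I expect to be the main obstacle is the interlacing part of the converse. The decomposition formulas are forced and routine, but nothing in them makes it algebraically evident that the resulting real polynomials interlace, and the naive ``$p/q$ is Herglotz'' argument fails because \eqref{eq:zerosNEW} imposes only a single real constraint and does not control $|\bar h(z)/h(z)|$ off the real axis. The continuity argument sidesteps this, but at the cost of the somewhat delicate case analysis showing that every way strict interlacing can break down in a limit produces a common real zero of $p$ and $q$, hence a forbidden real zero of $h$.
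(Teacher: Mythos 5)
Your proposal is correct, but it follows a genuinely different route from the paper's proof. For the forward direction the paper fixes $\alpha$ and tracks the hodograph argument $\varphi_h(t)$ over all of $\bbR$, using the signs of $\varphi_h'$ at the zeros of $p$ and of $tq(t)$ to build the explicit ladder of values \eqref{values.1}--\eqref{values.3} and thereby to read off both $n_\pm$ and $\varphi_h(0)=A_2-\pi n_+$; you instead fix $t=0$, note that $\sum_j\Arg_{[0,\pi)}z_j-\Arg\alpha\in\pi\bbZ$ is automatic from $h(0)=\alpha p(0)$, and pin down the integer by deforming $\alpha$ through the connected set $\bbC_+$ to $1+i\varepsilon$ and applying first-order perturbation theory at the simple zeros of $p$ (which simultaneously yields the count $n_+=\#\{j:\lambda_j>0\}$). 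For the converse, your $h,\bar h$ formulas are just the closed form of the paper's coefficientwise system \eqref{lin.sys}, so existence and uniqueness of the decomposition coincide; the real divergence is that the paper derives interlacing directly via the intermediate value theorem applied to $\varphi_h$ against the levels $A_1+\pi\bbZ$ and $A_2+\pi\bbZ$, whereas you get it by an open--closed argument on the connected configuration space $\mathcal{Z}_s$, reducing to the forward direction at a base point. Both work. The paper's computation is more quantitative --- the explicit values of $\varphi_h$ at every node are reused essentially verbatim for Proposition~\ref{rem:HBlocalization}, Propositions~\ref{prop:loc1} and~\ref{prop:loc2}, and Theorem~\ref{rem:around0} --- while your soft argument avoids the $s=0$, $0<s<n$, $s=n$ case split and isolates the only genuinely nontrivial content (branch selection) as a connectedness statement. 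One small point in your closedness step: your enumeration of degeneration modes omits the most basic one, a zero of $q$ colliding with an adjacent zero of $p$ with no same-polynomial collision; it does not affect the conclusion, since that too produces a common real zero of $p$ and $zq$ and hence a forbidden real zero of $h$, but it should be listed for the case analysis to be exhaustive.
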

\begin{remark}\label{rem:allpositive}
If all zeros of $p$ are positive or all negative, then we actually have strict interlacing
of zeros of $p(z)$ and $zq(z)$. After dividing by $\alpha$ we end up in the setting of Remark~\ref{Remark.HB.2}.
The extra restriction~\eqref{eq:zerosNEW} on possible zero configuration comes from the fact that $zq(z)$ vanishes at the origin, compare with Proposition~\ref{rem:HBlocalization}(ii), $s=0$.

\end{remark}
\begin{proof}

$[\Rightarrow]$
Suppose $p(\lambda_j)=0$ and $q(\mu_j)=0$, where $\{\lambda_j\}_{j=1}^n$
and $\{\mu_j\}_{j=1}^{n-1}$ strictly interlace, so that
$\lambda_j<\mu_j<\lambda_{j+1}$. Let $s$ be the number of negative zeros of $p$, i.e., $\lambda_s<0$  and $\lambda_{s+1}>0$.
Let $h$ be as in~\eqref{eq:extHBpolyNEW} and $z_j$'s be its zeros. Define the corresponding function $\varphi_h$ as in~\eqref{eq:a}.
Since $p(0)\neq0$, $\im \alpha\ne 0$, and  zeros of $p$ and $q$ do not overlap, the polynomial $h$ has no
real zeros, $z_{j}\in\bbC\setminus\bbR$, so that $\varphi_h(t)$ 
is well defined for all $t\in\bbR$.

Recall that $\im \alpha>0$ {by assumption}. Let us introduce the numbers
\begin{equation}\label{numbers.A}
A_1:=\Arg_{[0,\pi)} (1-\alpha) \in(0,\pi)\qquad
\text{and}\qquad A_2:=\Arg_{[0,\pi)} \alpha =\Arg\,\alpha \in(0,\pi).
\end{equation}
By~\eqref{eq:cotan} $A_1=\operatorname{arccot}\frac{\re \alpha-1}{\im \alpha}$
and $A_2 = \operatorname{arccot}\frac{\re \alpha}{\im \alpha}$. Since
$\im\alpha>0$ and $\operatorname{arccot}$ is monotone decreasing,
we get $\pi>A_1>A_2>0$.

Now rewrite ~\eqref{eq:extHBpolyNEW} for $t\in\bbR$ as
$$
\begin{pmatrix}
        \re h(t) \\
        \im h(t)
\end{pmatrix} =
p(t)
\begin{pmatrix}
        \re\alpha  \\
        \im\alpha
\end{pmatrix}
-tq(t)
\begin{pmatrix}
        \re\alpha-1 \\
        \im\alpha
\end{pmatrix}.
$$
Since $p(t)$ and $tq(t)$ do not vanish simultaneously, this is a non-trivial 
linear combination of two vectors $\mathbf{v}:=\begin{pmatrix}
        \re\alpha  \\
        \im\alpha
\end{pmatrix}$ and $\mathbf{u}:=\begin{pmatrix}
        \re\alpha-1 \\
        \im\alpha
\end{pmatrix}$. It is parallel to $\mathbf{v}$ if and only if $tq(t)=0$ and to $\mathbf{u}$ if and only if $p(t)=0$. This implies that $\Arg_{[0,\pi)} h(t) =A_1$ if and only if $p(t)=0$ and
$\Arg_{[0,\pi)} h(t) =A_2$ if and only if $tq(t)=0$.

Since $\Arg_{[0,\pi)} h(t) =\varphi_h(t)\operatorname{mod} \pi$,
we finally arrive to the central observation that for $t\in\bbR$
\begin{alignat}{3}
\label{eq:intersection1}
&\varphi_h(t) =A_1+\pi m \mbox{ for some }  m\in\bbZ & \quad\Longleftrightarrow \quad & p(t)=0,\\
\label{eq:intersection2}
&\varphi_h(t) =A_2+\pi r \mbox{ for some }  r\in\bbZ & \quad \Longleftrightarrow \quad & tq(t)=0.
\end{alignat}

%
We also want to understand the sign of the derivative of $\varphi_h$ at these points.  Straightforward calculations show that
\begin{equation*}
\varphi'_h(t)=\frac{d}{dt}  \left(\arctan \frac{\im h(t)}{\re h(t)} \right)=
\im\alpha\, \dfrac{t\left[p'(t)q(t)-p(t)q'(t)\right]-p(t)q(t)}{|h(t)|^2}.
\end{equation*}
Since $\im\alpha>0$, $p(t)q(t)=0$ at any zero of $p$ and $q$,
and $p'(t)q(t)-p(t)q'(t)>0$ for any $t\in\bbR$ (see, e.g.,~\cite[Theorem 3.4]{Holtz_Tyaglov}), we get
\begin{equation}\label{signs.1}
\sgn\, \varphi'_h(\lambda_j)=\sgn\,\lambda_j,\qquad\text{and}\qquad\sgn\, \varphi'_h(\mu_k)=\sgn\,\mu_k,
\end{equation}
for $j=1,\ldots,n$, $k=1,\ldots,n-1$. Notice also that
\begin{equation}\label{eq:at0}
\sgn\,\varphi_h(0) = -\sgn\, p(0)q(0).
\end{equation}

By~\eqref{limits} we have $\lim\limits_{t\to-\infty}\varphi_h(t)=-\pi(n_{+}-n_{-})$, where
$n_{+}$ and $n_{-}$ are the numbers of zeros of~$h$ in $\bbC_+$ and $\bbC_-$,
respectively.

Suppose first that $0<s<n$, that is, $\lambda_1<0$ and $\lambda_n>0$. Then $\varphi_h'(\lambda_1)<0$  by~\eqref{signs.1}, 
so that~\eqref{eq:intersection1} implies $\varphi_{h}(\lambda_1)
=A_1-\pi(n_{+}-n_{-}+1)$.
 By~\eqref{signs.1} $\varphi_h$ decreases at each of the points $\lambda_j$, $j=1,\ldots,s$,
and $\mu_k$, $k=1,\ldots,s-1$. Using~\eqref{eq:intersection1} and~\eqref{eq:intersection2} and induction, we conclude
\begin{equation}\label{values.1}
\varphi_{h}(\lambda_j)=A_1-\pi(n_{+}-n_{-}+j),\quad\varphi_{h}(\mu_k)=A_2-\pi(n_{+}-n_{-}+k),
\end{equation}
for $j=1,\ldots,s$, and $k=1,\ldots,s-1$.

On the interval $(\lambda_s,\lambda_{s+1})$, the polynomial $zq(z)$ has two zeros, $\mu_s$ and $0$.
If $\mu_s<0$, then $\varphi'_{h}(\mu_s)<0$ and $\varphi'_{h}(0)>0$, see~\eqref{signs.1} and~\eqref{eq:at0}.
Similarly, if $\mu_s>0$, then $\varphi'_{h}(0)<0$
and $\varphi'_{h}(\mu_s)>0$. If $\mu_s=0$, then $\varphi'_{h}(0)=0$. In any case \eqref{eq:intersection2} gives us
\begin{equation}\label{values.2}
\varphi_{h}(\mu_s)=\varphi_{h}(0)=A_2-\pi(n_{+}-n_{-}+s).
\end{equation}
From~\eqref{signs.1} it follows that at the points $\lambda_j$, $j=s+1,\ldots,n$,
and $\mu_k$, $k=s+1,\ldots,n-1$, the function $\varphi_h$ increases so that
\begin{equation}\label{values.3}
\varphi_{h}(\lambda_j)=A_1-\pi(n_{+}-n_{-}+2s+1-j),\quad\varphi_{h}(\mu_k)=A_2-\pi(n_{+}-n_{-}+2s+1-k),
\end{equation}
for $ j=s+1,\ldots,n$, and $k=s+1,\ldots,n-1$. In particular, we have
$$
\varphi_{h}(\lambda_n)=A_1-\pi(n_{+}-n_{-}+2s+1-n)
$$
and $\varphi'_h(\lambda_n)>0$. By \eqref{eq:intersection1}--\eqref{eq:intersection2}, we can conclude that for  $t>
\lambda_n$,
$$
A_1-\pi(n_{+}-n_{-}+2s+1-n)<\varphi_h(t)<\pi+A_2-\pi(n_{+}-n_{-}+2s+1-n).
$$
But $\lim\limits_{t\to+\infty}\varphi(t)=0$ by~\eqref{limits} {so that}
$$
\lim\limits_{t\to+\infty}\varphi(t)=\pi-\pi(n_{+}-n_{-}+2s+1-n)=
\pi(n-2s-n_{+}+n_{-})=0.
$$
This implies $n_{+}=n-s$ and $n_{-}=s$, {since $n=n_{+}+n_{-}$}.

Now from~\eqref{eq:a} it follows that
\begin{multline}\label{eq:a(0)}
\varphi_h(0) = \sum_{j=1}^n \Arg(-z_j)
= \sum_{z_j\in \bbC_+}   \big[(\Arg_{[0,\pi)} z_j ) -\pi\big]
+
\sum_{z_j\in \bbC_-}   \Arg_{[0,\pi)} z_j
\\
= \sum_{j=1}^n   (\Arg_{[0,\pi)} z_j ) -\pi n_+. 
\end{multline}
By~\eqref{values.2} $\varphi_h(0)=A_2-\pi n_+$, which proves that zeros indeed belong to~\eqref{eq:zerosNEW}. 

If $s=n$ then same arguments give us~\eqref{values.1} for all $j$ and $k$. Then $\varphi'_h(0)<0$ by~\eqref{eq:at0}, and $\varphi_{h}(0)=A_2-\pi(n_{+}-n_{-}+n)$. Using~\eqref{eq:intersection1}--\eqref{eq:intersection2}  we then have $A_2-\pi(n_{+}-n_{-}+n)>\varphi_h(t)>A_1-\pi-\pi(n_{+}-n_{-}+n)$ for $t>0$. Then we get $\lim\limits_{t\to+\infty}\varphi(t)=-\pi(n_{+}-n_{-}+n)=0$. This produces $n_+ = 0$, $n_- = n$, and then ~\eqref{eq:a(0)} leads to ~\eqref{eq:zerosNEW} as before.

Finally, if $s=0$ then by~\eqref{eq:intersection1}--\eqref{eq:intersection2} we get $\varphi_h'(0)>0$ and $\varphi_h(0)=A_2-\pi(n_{+}-n_{-})$. Then $\varphi'_h(\lambda_j)>0$, $\varphi'_h(\mu_k)>0$, and
$\varphi_{h}(\lambda_j)=A_1-\pi(n_{+}-n_{-}+1-j)$, $\varphi_{h}(\mu_k)=A_2-\pi(n_{+}-n_{-}-k)$ for all $j,k$. This implies $\varphi_{h}(\lambda_n)=A_1-\pi(n_{+}-n_{-}+1-n)$ and
$$
A_1-\pi(n_{+}-n_{-}+1-n)<\varphi_h(t)<A_2-\pi(n_{+}-n_{-}-n)
$$
for $t>\lambda_n$.  Then we get $\lim\limits_{t\to+\infty}\varphi(t)=-\pi(n_{+}-n_{-}-n)=0$. This produces $n_+ = n$, $n_- = 0$, and then ~\eqref{eq:a(0)} leads to ~\eqref{eq:zerosNEW} as before.

\medskip

$[\Leftarrow]$
Conversely, let the zeros of the polynomial $h$ belong to~\eqref{eq:zerosNEW} for a given complex
number~$\alpha$ with~$\im\alpha>0$. From~\eqref{eq:zerosNEW} we have
\begin{equation*}
\Arg\,\alpha=\Arg_{[0,\pi)} \alpha=\sum_{j=1}^n \Arg_{[0,\pi)} z_j
=\Arg_{[0,\pi)} \Big( \prod_{j=1}^n z_j\Big),
\end{equation*}
and therefore by~\eqref{eq:cotan}
\begin{equation}\label{eq:argalpha}
\frac{\re \alpha}{\im\alpha} = \frac{\re \prod\limits_{j=1}^n z_j}{\im \prod\limits_{j=1}^n z_j}.
\end{equation}

Let $h(z) = \sum\limits_{j=0}^n h_j z^j$, $p(z) = \sum\limits_{j=0}^n p_j z^j$,
$r(z) = \sum\limits_{j=0}^{n} r_j z^j$, where $p_j$'s and $r_j$'s
are to be determined from the condition
\begin{equation*}
h(z) = \alpha p(z) + (1-\alpha)r(z).
\end{equation*}
By equating $z^j$ coefficients, we get $h_j = \alpha p_j + (1-\alpha)r_j$ which
gives us the system of linear equations
\begin{equation}\label{lin.sys}
\begin{pmatrix}
\re \alpha & 1-\re \alpha \\
\im\alpha & -\im\alpha
\end{pmatrix}
\begin{pmatrix}
p_j \\ r_j
\end{pmatrix}
=
\begin{pmatrix}
\re h_j \\ \im h_j
\end{pmatrix}.
\end{equation}
Determinant of the coefficient matrix is equal to $-\im\alpha\ne 0$, so that $p_j$ and
$r_j$ are \textit{uniquely} determined. Since $h_n=1$ we have $p_n=r_n=1$ so that $p$ and
$r$ are monic of degree $n$. By~\eqref{eq:argalpha} and the fact that
$h_0=(-1)^n\prod\limits_{j=1}^n z_j$, the solution of the system~\eqref{lin.sys} for $j=0$
has the form
\begin{equation*}
p_0=
(-1)^n \frac{\im  \prod\limits_{j=1}^n z_j}{\im\alpha}\qquad\text{and}\qquad r_0=r(0)=0.
\end{equation*}
Thus, $r(z) = zq(z)$ for a polynomial $q$ of degree $n-1$, so
the formula~\eqref{eq:extHBpolyNEW} holds, where the polynomials $p$ and $q$ are uniquely
determined by the polynomial $h$ and the number $\alpha$.

Let us show that the zeros $\{\lambda_j\}_{j=1}^n$
 of $p$ and  $\{\mu_j\}_{j=1}^{n-1}$ of $q$ are real, simple and strictly interlacing.
 {To this end}, define $A_1$ and $A_2$ with $\pi>A_1>A_2>0$ as in~\eqref{numbers.A} and observe that~\eqref{eq:intersection1}--\eqref{eq:intersection2} still holds by the identical arguments (notice that $p(t)$ and $tq(t)$ cannot vanish simultaneously for $t\in\bbR$ since $h$ has no real roots).


Note that ~\eqref{eq:a(0)} holds  where $n_+$ is the number of zeros of $h$ in $\bbC_+$. Combining this with~\eqref{eq:zerosNEW} and~\eqref{numbers.A}, we get $\varphi_h(0)=A_2-\pi n_+$. Moreover, $\lim\limits_{t\to-\infty}\varphi_h(t)
=\pi(n_{-}-n_{+})$ and $\lim\limits_{t\to+\infty}\varphi_h(t)=0$ by~\eqref{limits}.
Now the intermediate value theorem for continuous functions implies that  $\varphi_h$ has to attain each value of the form $A_1+\pi m$ ($m\in\bbZ$) in the range $\varphi_h(0)<y<\varphi_h(-\infty)$ at least once on $t<0$. There are $n_-$ of them: $A_1-(n_+-j)\pi$ with $0\le j\le n_- -1$ (this includes the case $n_-=0$ which is special since then $\varphi_h(0)>\varphi_h(-\infty)$).
By~\eqref{eq:intersection1} this shows that there are at least~$n_-$ distinct negative zeros  of $p$. Similarly, there are $n_+$ of values of the form $A_1+k \pi$ in the range $\varphi_h(0)<y<\varphi_h(+\infty)$:  these are $A_1-(n_+-j)\pi$ with $0\le j\le n_+ -1$.
By the intermediate value theorem and ~\eqref{eq:intersection1} this determines at least $n_+$ distinct positive zeros of $p$. Since $n_-+ n_+ = n$, and $\deg p = n$, this implies that $p$ has only real simple zeros $\{\lambda_j\}_{j=1}^n$.

Similarly, see~\eqref{eq:intersection2}, the zeros of $zq(z)$ are determined by checking where $\varphi_h$ attains the values  $A_2+\pi r$ ($r\in\bbZ$).
There is at least one such value on each interval $(\lambda_j,\lambda_{j+1})$ for each $j\ne n_-$.

If $n_->0$ then we need to inspect the interval $(\lambda_{n_-},\lambda_{n_-+1})$ where we have $\varphi_h(0)=A_2-\pi n_+$ and $\varphi_h(\lambda_{n_-}) = \varphi_h(\lambda_{{n_-}+1}) >\varphi_h(0)$. This means that apart from $t=0$ there is at least one more zero of $tq(t)$ (potentially also at $t=0$ which is allowed).  Thus, we determined all $n$ zeros of $tq(t)$, and zeros of $q$ are shown to be strictly interlacing with the zeros of $p$.

If $n_-=0$ then we already have found $n-1$ of {\it positive} zeros of $tq(t)$ on each interval $(\lambda_j,\lambda_{j+1})\subset \bbR_+$. These are, therefore, $n-1$ zeros of $q$ which are all real, simple, and strictly interlacing with~$\{\lambda_j\}_{j=1}^n$.
\end{proof}

%

\section{Generalized Hermite--Biehler theorem: broken interlacing around $t=0$}

The next theorem is an analogue of Theorem~\ref{thm:HBextendedNEW} for the case when interlacing of two polynomials is still broken at one location around the origin, but we no longer  assume $t=0$ to be one of the zeros, see ~\eqref{almost.interlacing}. Another way to encode this is to say that the zeros of $zp(z)$ and $q(z)$ strictly interlace (warning: there is no $zp(z)$ in~\eqref{eq:extHBpolyNEW.2}!) This type of broken interlacing naturally appears for multiplicative perturbations of {\it singular} Jacobi matrices, see Section~\ref{ss:multipl} below.

On a related note, we observe that if we are in the setting of Theorem~\ref{thm:HBextendedNEW} but with $p(0)=0$, then $p(z) = z\widehat{p}(z)$ and therefore $h(z)=z\big[\alpha \widehat{p}(z) + (1-\alpha)q(z)\big]$,
and upon dividing by $z$ we end up in the setting of Theorem~\ref{rem:around0}.

%
\begin{theorem}\label{rem:around0}
Let $p$ and $r$ be monic real polynomials, $\deg p= \deg r = n$, and $\alpha\in\bbC_+$.
Define  the
monic polynomial
	\begin{equation}\label{eq:extHBpolyNEW.2}
		h(z) = \alpha p(z) + (1-\alpha) r(z) = \prod_{j=1}^n (z-z_j).
	\end{equation}
If the zeros $\{\lambda_j\}_{j=1}^n$ of $p$ and $\{\mu_j\}_{j=1}^n$ of $r$ 
	satisfy the inequalities
%
\begin{equation}\label{almost.interlacing}
\lambda_1 < \mu_1 < \ldots \lambda_s < \mu_s<0 < \mu_{s+1} < \lambda_{s+1} < \ldots < \mu_n <\lambda_n
\end{equation}
(for some $0\le s\le n-1$) , then
the zeros of $h(z)$ belong to
\begin{equation}\label{eq:zerosMORE}
\left\{ \{z_j\}_{j=1}^n \in (\bbC\setminus\bbR)^n: \sum_{j=1}^n \Arg_{[0,\pi)}
z_j < \Arg\,\alpha\right\}.
\end{equation}

Conversely, if  $\alpha\in\bbC_+$ and for the zeros of a monic complex polynomial $h(z) = \prod_{j=1}^n (z-z_j)$
the condition~\eqref{eq:zerosMORE} holds, then there exists a unique pair of monic real polynomials $p$ and $r$ with strictly interlacing zeroes such that $h(z) = \alpha p(z) + (1-\alpha) r(z) $.

%
%
%
%
%
%

The number of $z_j$'s in $\bbC_+$ (respectively, in $\bbC_-$) coincides with the
number of positive (respectively, negative)  zeros of $p$.
\end{theorem}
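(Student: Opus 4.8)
\textbf{Proof proposal for Theorem~\ref{rem:around0}.}

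The plan is to mirror the argument used for Theorem~\ref{thm:HBextendedNEW}, tracking the branch $\varphi_h$ of $\arg h(t)$ defined in~\eqref{eq:a} as $t$ runs along the real axis. For the forward direction, I would first observe that since $p$ and $r$ satisfy the strict inequalities~\eqref{almost.interlacing}, they have no common real zero and $h$ has no real zero, so $\varphi_h$ is well defined and continuous on $\bbR$. Next, introduce $A_1 := \Arg_{[0,\pi)}(1-\alpha)$ and $A_2 := \Arg_{[0,\pi)}\alpha = \Arg\,\alpha$, with $\pi > A_1 > A_2 > 0$ exactly as in~\eqref{numbers.A}. Writing $\bigl(\re h(t),\im h(t)\bigr)^{\!T} = p(t)\,\mathbf{v} - r(t)\,\mathbf{u}$ with the same vectors $\mathbf v,\mathbf u$ as before, one gets the analogues of~\eqref{eq:intersection1}--\eqref{eq:intersection2}: $\varphi_h(t) \equiv A_1 \pmod\pi \iff p(t)=0$ and $\varphi_h(t)\equiv A_2\pmod\pi \iff r(t)=0$. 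The derivative computation now reads $\varphi_h'(t) = \im\alpha\,\dfrac{p'(t)r(t)-p(t)r'(t)}{|h(t)|^2}$, and since the interlacing~\eqref{almost.interlacing} is of $zp(z)$ versus $q(z)$ type, the Wronskian-type quantity $p'(t)r(t)-p(t)r'(t)$ no longer has constant sign; instead, by the standard interlacing criterion (e.g.~\cite[Thm~3.4]{Holtz_Tyaglov}) it is positive on $(0,\infty)$-side and negative on $(-\infty,0)$-side in a way that makes $\varphi_h$ \emph{decrease} at $\lambda_j,\mu_j$ for $j\le s$ and \emph{increase} at $\lambda_j,\mu_j$ for $j\ge s+1$. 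Combining the limits $\varphi_h(-\infty) = \pi(n_--n_+)$, $\varphi_h(+\infty)=0$ from~\eqref{limits} with these monotonicity changes and the $\pmod\pi$ crossing conditions, a bookkeeping argument identical in spirit to~\eqref{values.1}--\eqref{values.3} pins down $n_+ = n-s$, $n_- = s$, and yields $\varphi_h(0) \in \bigl(A_2 - \pi n_+ - \pi,\; A_2 - \pi n_+\bigr)$ strictly (the strictness because $t=0$ is \emph{not} a zero of either $p$ or $r$, so $\varphi_h(0)$ is not exactly of the form $A_1+\pi m$ or $A_2+\pi r$). Feeding this into the identity~\eqref{eq:a(0)}, $\varphi_h(0) = \sum_j \Arg_{[0,\pi)} z_j - \pi n_+$, gives exactly the strict inequality $\sum_j \Arg_{[0,\pi)} z_j < \Arg\,\alpha$ of~\eqref{eq:zerosMORE}.

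For the converse, I would reconstruct $p$ and $r$ from $h$ and $\alpha$ by solving, coefficient by coefficient, the same linear system~\eqref{lin.sys}: the matrix has determinant $-\im\alpha\ne 0$, so $p$ and $r$ are uniquely determined, both monic of degree $n$ (from $h_n=1$). The difference from Theorem~\ref{thm:HBextendedNEW} is that now condition~\eqref{eq:zerosMORE} is a strict inequality $\sum_j \Arg_{[0,\pi)} z_j < \Arg\,\alpha$, which via~\eqref{eq:cotan} translates into an inequality (not equality) relating $\re\prod z_j/\im\prod z_j$ to $\re\alpha/\im\alpha$; consequently $r(0) = r_0 \ne 0$ in general, i.e.\ we genuinely get $\deg r = n$ rather than a factor of $z$. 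Then, with $A_1,A_2$ as above and the crossing dichotomy~\eqref{eq:intersection1}--\eqref{eq:intersection2} valid as before (since $h$ has no real roots), I would run the intermediate value theorem argument: from $\varphi_h(0) = \sum_j\Arg_{[0,\pi)} z_j - \pi n_+ < A_2 - \pi n_+$ together with $\varphi_h(-\infty) = \pi(n_--n_+)$ and $\varphi_h(+\infty)=0$, count the forced crossings of levels $A_1+\pi m$ on $(-\infty,0)$ and on $(0,\infty)$, getting at least $n_-$ negative and at least $n_+$ positive zeros of $p$; since $n_-+n_+ = n = \deg p$, these are all of them and they are real and simple. The level $A_2+\pi r$ crossings then interlace, giving $n$ real simple zeros of $r$, with exactly one zero of $r$ in each gap between consecutive zeros of $p$ \emph{except} that the gap containing $0$ contains two zeros of $r$ (one on each side of $0$), which is precisely the pattern~\eqref{almost.interlacing} — and the strictness of~\eqref{eq:zerosMORE} guarantees $0$ is not itself a zero of $r$.

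The main obstacle I anticipate is the sign analysis of $\varphi_h'$ and the associated level-counting near the origin in the converse direction. In Theorem~\ref{thm:HBextendedNEW} the factor $zq(z)$ automatically contributed a zero at $0$, which anchored the bookkeeping; here $0$ is a non-zero of both $p$ and $r$, so one must instead use the strict inequality in~\eqref{eq:zerosMORE} to locate $\varphi_h(0)$ strictly between two consecutive critical levels and argue that the gap of $p$ straddling $0$ must contain exactly two zeros of $r$ rather than one. Handling the boundary cases $s=0$ (all $\lambda_j$, all but... the split is $\mu_1,\dots,\mu_n$ with exactly one $\mu$ negative-or-not — here $n_-$ could be $0$) and $n_-=0$ versus $n_->0$ requires the same case split as in the proof of Theorem~\ref{thm:HBextendedNEW}, and one must be careful that when $n_-=0$ the half-line $(-\infty,0)$ carries no crossings of the $A_1$-levels, so that $\varphi_h(0) > \varphi_h(-\infty)$; the argument still goes through but the inequalities flip orientation, exactly as in the $s=0$ paragraph of the earlier proof. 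Everything else is a routine adaptation.
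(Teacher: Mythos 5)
Your proposal is correct and takes essentially the same route as the paper's own proof: both directions adapt the $\varphi_h$-argument of Theorem~\ref{thm:HBextendedNEW} with $r(t)$ in place of $tq(t)$, use the level-crossing dichotomy~\eqref{eq:intersection1}--\eqref{eq:intersection2} together with the sign pattern~\eqref{signs.1} and the limits~\eqref{limits}, and translate the strict dip $\varphi_h(0)<A_2-\pi n_+$ (forced by $\mu_s<0<\mu_{s+1}$) into the strict inequality of~\eqref{eq:zerosMORE} via~\eqref{eq:a(0)}, reversing the count for the converse. One small inaccuracy: $p'(t)r(t)-p(t)r'(t)$ is not in general negative on all of $(-\infty,0)$ and positive on all of $(0,\infty)$ (its sign change inside $(\mu_s,\mu_{s+1})$ need not occur at $t=0$); what you actually need, and what does hold, is~\eqref{signs.1}, obtained by applying the Wronskian positivity to the strictly interlacing pair $tp(t)$, $r(t)$ and evaluating at the points $\lambda_j$ and $\mu_k$.
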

\begin{remark}
Compare with Theorem \ref{thm:HBextendedNEW}: if $\mu_s \to 0$ or $\mu_{s+1}\to 0$ then the zeros $\{z_j\}_{j=1}^n$ approach~\eqref{eq:zerosNEW}.
\end{remark}
\begin{remark}
By~\eqref{almost.interlacing} with $s=0$ we mean
\begin{equation}\label{almost.interlacing2}
0 < \mu_{1} < \lambda_{1} < \ldots < \mu_n <\lambda_n.
\end{equation}
Compare with
Proposition~\ref{rem:HBlocalization}(i) with $s=0$.
\end{remark}

\begin{proof}

$[\Rightarrow]$
Given $\alpha\in\bbC_+$, suppose that the zeros of the polynomials $p$ and $r$ satisfy
the inequalities~\eqref{almost.interlacing}, and consider the polynomial $h$
defined in~\eqref{eq:extHBpolyNEW.2}. Repeating the arguments in the $[\Rightarrow]$ proof of Theorem~\ref{thm:HBextendedNEW} with $r(t)$ instead of $tq(t)$ and $\{\mu_j\}_{j=1}^n$ instead of $\{0\}\cup\{\mu_j\}_{j=1}^{n-1}$, we get that the
function~$\varphi_h$ defined in~\eqref{eq:a} is well defined on~$\bbR$ (since $p$ and {$r$} do not vanish on~$\bbR$
simultaneously) and satisfies the identities~\eqref{signs.1},~\eqref{values.1} and~\eqref{values.3}. The analogue of~\eqref{values.2} is $\varphi_{h}(\mu_s)=\varphi_{h}(\mu_{s+1})=A_2-\pi(n_{+}-n_{-}+s)$. Because $\varphi'_{h}(\mu_s)<0$ and $\varphi'_{h}(\mu_{s+1})>0$ we arrive to $\varphi_h(0)< A_2-(n_{+}-n_{-}+s)\pi$. Then using $s=n_-$, $n-s=n_+$ (the same arguments as in the proof of Theorem~\ref{thm:HBextendedNEW}) and ~\eqref{eq:a(0)}, we get $\sum\limits_{j=1}^n \Arg_{[0,\pi)}z_j < \Arg\,\alpha$.


$[\Leftarrow]$ Conversely, let the zeros of the polynomial $h$ belong to~\eqref{eq:zerosMORE} for a given complex
number $\alpha\in\bbC_+$. Then from~\eqref{eq:a(0)} and~\eqref{eq:zerosMORE} we have
\begin{equation}\label{ineq.1}
\varphi_h(0)< A_2-\pi n_{+}<A_1-\pi n_{+},
\end{equation}
where $A_1$ and $A_2$ are defined in~\eqref{numbers.A}, and $n_{+}$ is the number of zeros of $h$ in the
$\bbC_+$. In the same way as in the proof of Theorem~\ref{thm:HBextendedNEW}, one can prove that
there exists a \textit{unique} pair of real monic polynomials $p$ and $r$ of degree $n$ such that
\eqref{eq:extHBpolyNEW.2} holds. Following the proof of Theorem~\ref{thm:HBextendedNEW} and
using~\eqref{ineq.1}, one can show that the polynomial $p$ has real and simple zeros $\lambda_j$'s satisfying
\begin{equation*}
\lambda_{1}<\ldots<\lambda_{n_{-}}<0<\lambda_{n_{-}+1}<\ldots<\lambda_n,
\end{equation*}
where $n_{-}$ is the number of zeros of $h$ in $\bbC_-$, $n_{-}+n_{+}=n$ while
the polynomial $r(z)=\prod_{j=1}^n (z-\mu_j)$ has an odd number of zeros, counting multiplicities, on every interval $(\lambda_j,\lambda_{j+1})$,
$j\ne {n_-}$ and an even number of zeros (at least two), counting multiplicities, on $(\lambda_{n_{-}},\lambda_{n_{-}+1})$
due to $\varphi_h(\lambda_{n_-}) = \varphi_h(\lambda_{{n_-}+1}) > \varphi_h(0)$. Moreover, because of $\varphi_h(0)< A_2-\pi n_{+}$, we must have two of $\mu_j$'s distinct from $0$. This proves~\eqref{almost.interlacing}.
\end{proof}

\begin{remark}\label{rem:shifted}
We can restate Theorem~\ref{thm:HBextendedNEW} with the shift from $t=0$ to an arbitrary chosen
$t=\xi\in\bbR$. Then $h$ in~\eqref{eq:extHBpolyNEW} should be replaced with
$h(z) = \alpha p(z) + (1-\alpha) (z-\xi) q(z)$ while~\eqref{eq:zerosNEW} should be replaced with
\begin{equation*}
\left\{ \{z_j\}_{j=1}^n \in (\bbC\setminus\bbR)^n: \sum_{j=1}^n \Arg_{[0,\pi)} (z_j-\xi)  =\Arg\,\alpha \right\}.
\end{equation*}
Similar shift can be done in Theorem~\ref{rem:around0} with $\mu_s<\xi<\mu_{s+1}$ in place of $\mu_s<0<\mu_{s+1}$ in~\eqref{almost.interlacing}.
\end{remark}

From~\eqref{eq:a(0)} it follows that
\begin{equation*}
\sum\limits_{j=1}^{n}\Arg\,z_j=\sum\limits_{j=1}^{n}\Arg\,(-z_j)+\pi(n_{+}-n_{-})=A_2-\pi n_{-}
\end{equation*}
This formula and Theorems~\ref{thm:HBextendedNEW} and~\ref{rem:around0} imply the following
analogues of Proposition~\ref{rem:HBlocalization}.
\begin{proposition}\label{prop:loc1}
In the setting of Theorem~\ref{thm:HBextendedNEW}, $p$ has $s$ negative zeros and $n-s$ positive ones if and only if the
polynomial $h(z)=\prod\limits_{j=1}^n(z-z_j)$ has $s$ zeros in $\bbC_-$, $n-s$ zeros in $\bbC_+$,
and
\begin{equation*}
\sum_{j=1}^n \Arg\, z_j  =\Arg\,\alpha-\pi s.
\end{equation*}
\end{proposition}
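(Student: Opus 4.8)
The plan is to derive Proposition~\ref{prop:loc1} directly from Theorem~\ref{thm:HBextendedNEW} together with the displayed formula $\sum_{j=1}^n \Arg z_j = A_2 - \pi n_-$ that precedes the statement (here $A_2 = \Arg_{[0,\pi)}\alpha = \Arg\alpha$, using $\alpha\in\bbC_+$). Theorem~\ref{thm:HBextendedNEW} already supplies the dictionary: under the hypotheses of that theorem, the number of zeros $z_j$ in $\bbC_+$ (resp. $\bbC_-$) equals the number of positive (resp. negative) zeros of $p$. So the content that remains is just to match the two descriptions of the argument sum and verify the equivalence is genuinely an ``if and only if''.

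First I would recall that in the setting of Theorem~\ref{thm:HBextendedNEW} we always have $h(z)=\alpha p(z)+(1-\alpha)zq(z)=\prod_{j=1}^n(z-z_j)$ with all $z_j\in\bbC\setminus\bbR$ and, crucially, $\sum_{j=1}^n \Arg_{[0,\pi)} z_j = \Arg\alpha$ — this is exactly~\eqref{eq:zerosNEW}. Next I would invoke~\eqref{eq:a(0)}, which gives $\varphi_h(0) = \sum_{j=1}^n \Arg_{[0,\pi)} z_j - \pi n_+$, and combine it with $\sum_{j=1}^n\Arg z_j = \sum_{j=1}^n \Arg(-z_j) + \pi(n_+-n_-) = \varphi_h(0) + \pi(n_+-n_-)$ to obtain $\sum_{j=1}^n \Arg z_j = \Arg\alpha - \pi n_-$. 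Since $n_+ + n_- = n$, the condition ``$h$ has $s$ zeros in $\bbC_-$ and $n-s$ in $\bbC_+$'' is the same as ``$n_- = s$'', which by the last assertion of Theorem~\ref{thm:HBextendedNEW} is equivalent to ``$p$ has $s$ negative zeros and $n-s$ positive ones.'' Plugging $n_-=s$ into the argument-sum formula yields $\sum_{j=1}^n \Arg z_j = \Arg\alpha - \pi s$, which is the displayed identity in the proposition.

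For the converse direction of the ``if and only if'' one has to be slightly careful: the hypothesis of Proposition~\ref{prop:loc1} is ``in the setting of Theorem~\ref{thm:HBextendedNEW},'' so strict interlacing of the zeros of $p$ and $q$ is assumed throughout, and hence the full conclusion of that theorem (in particular the zero count $n_-=$ number of negative zeros of $p$, and the location constraint~\eqref{eq:zerosNEW}) is available on both sides. Thus I would argue: the three conditions (a) $p$ has $s$ negative and $n-s$ positive zeros; (b) $h$ has $s$ zeros in $\bbC_-$ and $n-s$ in $\bbC_+$; (c) $\sum_j \Arg z_j = \Arg\alpha - \pi s$ are pairwise equivalent. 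Indeed (a)$\Leftrightarrow$(b) is Theorem~\ref{thm:HBextendedNEW}; (b)$\Rightarrow$(c) is the substitution $n_-=s$ above; and (c)$\Rightarrow$(b) follows because the formula $\sum_j \Arg z_j = \Arg\alpha - \pi n_-$ holds unconditionally in this setting, so $\Arg\alpha - \pi n_- = \Arg\alpha - \pi s$ forces $n_-=s$, and then $n_+=n-s$.

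I do not expect a serious obstacle here — the proposition is essentially a bookkeeping corollary, and the author has already done the analytic work in the formula displayed just before the statement. The one point requiring a sentence of care is making explicit that the argument-sum identity $\sum_j\Arg z_j = \Arg\alpha - \pi n_-$ is valid for \emph{all} configurations in the setting of Theorem~\ref{thm:HBextendedNEW} (not just the particular $s$ under consideration), since that is what powers the converse implication (c)$\Rightarrow$(b). Everything else is a direct citation of Theorem~\ref{thm:HBextendedNEW} and equation~\eqref{eq:a(0)}.
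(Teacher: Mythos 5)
Your proposal is correct and follows exactly the route the paper intends: the paper gives no separate proof of Proposition~\ref{prop:loc1}, merely noting that it follows from the displayed identity $\sum_{j}\Arg z_j=A_2-\pi n_-$ (a consequence of~\eqref{eq:a(0)} and~\eqref{eq:zerosNEW}) together with the zero-count statement of Theorem~\ref{thm:HBextendedNEW}, which is precisely your chain (a)$\Leftrightarrow$(b)$\Leftrightarrow$(c). Your remark that the identity holds unconditionally in the setting of Theorem~\ref{thm:HBextendedNEW}, so that (c) forces $n_-=s$, is exactly the point that makes the equivalence two-sided.
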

\begin{proposition}\label{prop:loc2}
In the setting of Theorem~\ref{rem:around0}, $p$ has $s$ negative zeros and $n-s$ positive if and only if the
polynomial $h(z)=\prod\limits_{j=1}^n(z-z_j)$ has $s$ zeros in $\bbC_-$, $n-s$ zeros in $\bbC_+$,
and
\begin{equation*}
-\pi s<\sum_{j=1}^n \Arg\, z_j < \Arg\,\alpha-\pi s.
\end{equation*}
\end{proposition}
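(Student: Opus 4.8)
The plan is to obtain Proposition~\ref{prop:loc2} as a direct bookkeeping corollary of Theorem~\ref{rem:around0}, exactly parallel to the way Proposition~\ref{prop:loc1} follows from Theorem~\ref{thm:HBextendedNEW}. The only ingredient beyond Theorem~\ref{rem:around0} is the identity converting $\sum_j \Arg z_j$ into $\sum_j \Arg_{[0,\pi)} z_j$ that already appears (in equality form) in the paragraph preceding Proposition~\ref{prop:loc1}.

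First I would record the conversion identity. Since $\Arg(-z)=\Arg z-\pi$ for $z\in\bbC_+$ and $\Arg(-z)=\Arg z+\pi$ for $z\in\bbC_-$, one has $\sum_{j=1}^n\Arg z_j=\sum_{j=1}^n\Arg(-z_j)+\pi(n_+-n_-)$, where $n_\pm$ is the number of zeros of $h$ in $\bbC_\pm$. Substituting \eqref{eq:a(0)}, i.e. $\sum_j\Arg(-z_j)=\sum_j\Arg_{[0,\pi)} z_j-\pi n_+$, gives
\begin{equation*}
\sum_{j=1}^n\Arg z_j=\sum_{j=1}^n\Arg_{[0,\pi)} z_j-\pi n_-.
\end{equation*}
I would also note that in the setting of Theorem~\ref{rem:around0} the polynomial $h$ has no real zeros, so by \eqref{eq:cotan} each $\Arg_{[0,\pi)} z_j$ lies in $(0,\pi)$; in particular $\sum_j\Arg_{[0,\pi)} z_j>0$.

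Then the two directions are immediate. For the forward implication, if the given $p$ has $s$ negative and $n-s$ positive zeros, the last sentence of Theorem~\ref{rem:around0} forces $n_-=s$; plugging $n_-=s$ into the identity above and combining with $0<\sum_j\Arg_{[0,\pi)} z_j<\Arg\alpha$ — the upper bound being \eqref{eq:zerosMORE}, which holds by Theorem~\ref{rem:around0}, and the lower bound coming for free from the previous step — yields $-\pi s<\sum_j\Arg z_j<\Arg\alpha-\pi s$. For the converse, if $h$ has $s$ zeros in $\bbC_-$ and $n-s$ in $\bbC_+$, then again by the last sentence of Theorem~\ref{rem:around0} the number of negative (resp. positive) zeros of $p$ equals the number of $z_j$ in $\bbC_-$ (resp. $\bbC_+$), so $p$ has $s$ negative and $n-s$ positive zeros; the displayed $\Arg$-inequality is then automatically consistent, being a restatement of $0<\sum_j\Arg_{[0,\pi)} z_j<\Arg\alpha$ via the identity. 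I do not anticipate any genuine obstacle: the statement is pure bookkeeping, and the only point to get right is the sign accounting in passing between $\Arg$ and $\Arg_{[0,\pi)}$, together with the observation that the lower bound $-\pi s$ is not extra interlacing information but merely records that $h$ has no real zeros (each $\Arg_{[0,\pi)} z_j>0$), which is why one gets a two-sided estimate here whereas \eqref{eq:zerosNEW} was one equation.
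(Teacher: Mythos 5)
Your proposal is correct and follows essentially the same route as the paper, which likewise derives these propositions by combining the conversion identity $\sum_j\Arg z_j=\sum_j\Arg(-z_j)+\pi(n_+-n_-)$ (equivalently, $\sum_j\Arg z_j=\sum_j\Arg_{[0,\pi)}z_j-\pi n_-$) with Theorem~\ref{rem:around0}. Your explicit observation that the lower bound $-\pi s$ merely records $\sum_j\Arg_{[0,\pi)}z_j>0$ (no real zeros) is exactly the detail the paper leaves implicit.
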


\section{Applications}\label{s:applications}

In this section, we show how the Hermite-Biehler theorem and its extensions
from the previous sections can be applied to solve the direct and inverse spectral
problems for additive and multiplicative rank-one (and some rank-two) perturbations of
Hermitian 
matrices.

Let
\begin{equation}\label{eq:jacobi}
\calJ =
\begin{pmatrix}
b_1&a_1&0& &\\
a_1&b_2&a_2&\ddots &\\
0&a_2&b_3&\ddots & 0 \\
&\ddots&\ddots&\ddots & a_{n-1} \\
& & 0 & a_{n-1} & b_n
\end{pmatrix}, \quad a_j > 0,\ \  b_k\in\bbR.
\end{equation}
For each such $\calJ$, let us also define $\calJ^{(j)}$ to be the $(n-j)\times (n-j)$ Jacobi matrix obtained from~\eqref{eq:jacobi}
by removing its first $j$ rows and first $j$ columns. Let $p_j$, $j=1,\ldots,n-1$, be the characteristic polynomials of
$\calJ^{(j)}$, respectively, and $p_0$ is the characteristic polynomial of $\calJ$. It is easy to check that
\begin{equation}\label{rec.rel}
p_{j-1}(z)=(z-b_{j})p_{j}(z)-a_{j}^2p_{j+1}(z),\qquad j=1,\ldots,n,
\end{equation}
where $p_{n}(z)\equiv1$ and $p_{n+1}(z)\equiv1$. It is well-known that the zeros of $p_0(z)$ and $p_1(z)$ are real, simple, and strictly
interlacing~\cite{KreinGantmacher}. As was established by Gray and Wilson~\cite{GrayWilson}
and independently by Hald~\cite{Hald} (see also Hochstadt~\cite{Hochstadt}), given two interlacing sets of real numbers $\{\lambda_j\}_{j=1}^n$ and $\{\mu_k\}_{k=1}^{n-1}$
%
there exists \textit{a unique} matrix $\calJ$ defined in~\eqref{eq:jacobi} such that these two sets are the eigenvalues of
$\calJ$ and $\calJ^{(1)}$, respectively. 

\subsection{Additive non-Hermitian rank one perturbations}\label{ss:additive}

Let $e_1$ be the $n\times 1$ vector with the first entry equal to $1$
and $0$ everywhere else. 
Consider the non-Hermitian (additive) perturbation
\begin{equation}\label{eq:additiveJ}
\calJ_{l,+}=\calJ + i l e_1 e_1^*
\end{equation}
of $\calJ$. In other words, $\calJ_{l,+}$ is~\eqref{eq:jacobi} but with $(1,1)$-entry
replaced by $b_1+il$. See~\cite{AT} for an in-depth study of such perturbations,
and~\cite{Wall} and~\cite[Chapter~X]{Wall.b} for a  related topic in theory of stable polynomials.

Let $h(z)$ be the characteristic polynomial $h(z)$ of $\calJ_{l,+}$. Using the
Laplace expansion for the determinants we obtain the following representation for $h(z)$
\begin{equation*}
h(z)=(z-b_1-il)p_1(z) - a_1^2 p_2(z)=p_0(z) - ilp_1(z).
\end{equation*}
As  mentioned above, the zeros of $p_0$ and $p_1$ are real, simple, and strictly interlace.
Therefore, by the Hermite--Biehler theorem, Theorem~\ref{thm:HB}, the eigenvalues of $\calJ_{l,+}$
lie in the open upper (lower) half-plane whenever $l>0$ ($l<0$).

The inverse problem for matrix $\calJ_{l,+}$ requires to find the matrix from its spectrum.
Namely, let the numbers $\{z_j\}_{j=1}^{n}$ lie in $\bbC_+$. Then the polynomial
$h(z)=\prod\limits_{j=1}^{n} (z-z_j)$ can be \textit{uniquely} represented as $h=p-ilq$ for some $l\in\bbR$ and
 monic real polynomials $p$ and $q$ with $\deg p=n$, $\deg q=n-1$. From Theorem~\ref{thm:HB} it
follows that $l>0$, and the zeros of $p$ and $q$ are real, simple, and strictly interlace.
These $p$ and $q$ allow to uniquely recover $\calJ$, and the number $l$ determines the perturbation magnitude, see~\eqref{eq:additiveJ}.
Thus, $\calJ_{l,+}$~\eqref{eq:additiveJ}
can be reconstructed from its spectrum, and the solution of the inverse problem is {unique}.

\smallskip

This also allows us to study the spectrum of rank-one perturbations of generic Hermitian matrices: recall that any $n\times n$ Hermitian matrix $H=H^*$
with a cyclic vector $\bm{v}$ can be reduced to a Jacobi form $\calJ$~\eqref{eq:jacobi} via $H=S^*\calJ S$ for some unitary $S$ with $S\bm{v} =  \bm{e}_1$ (the Lanczos algorithm~\cite{Lanczos}). Using this, one can reduce any
\begin{equation}\label{eq:additiveH}
 	 H  + i \Gamma
\end{equation}
with $H=H^*$, $\Gamma=\Gamma^*$, $\rank\,\Gamma = 1$,
to~\eqref{eq:additiveJ}. Indeed, $\Gamma=\Gamma^*$ with $\rank\,\Gamma = 1$ imply $\Gamma = l \bm{v} \bm{v}^*$ for some $\bm{v}\in\bbC^n\setminus{\bm{0}}$, $l\in\bbR\setminus\{0\}$. If $\bm{v}$ is cyclic for $H$ then \begin{equation}\label{eq:Lan}
S(H + i \Gamma) S^* = \calJ+ il S\bm{v}\bm{v}^* S^*  = \calJ_{l,+}
\end{equation}
and we end up in the setting above.


Moreover, assuming that signature of $H$ is given, one ends up in the setting of Proposition~\ref{rem:HBlocalization} where we know the number of
 positive/negative zeros of $p$. In particular, if $\bm{v}$ is cyclic and $l>0$ then eigenvalues of ~\eqref{eq:additiveH} belong to~\eqref{eq:HBlocalization2} or to~\eqref{eq:HBlocalization}, depending on whether $\lambda=0$ is an eigenvalue of $H$ or not, respectively.

If instead $\bm{v}$ is not cyclic, i.e., $\dim \operatorname{span}\{H^j \bm{v}:j\ge0\} = k<n$ then the Gram--Schdmidt procedure in the Lanczos algorithm terminates early. We still get~\eqref{eq:Lan} but now with $a_k=0$ in ~\eqref{eq:jacobi}, and so we get $k$ eigenvalues in $\bbC_+$ and $n-k$ real eigenvalues.

See~\cite{AlpK21,K17} for applications of this to additive perturbations of random matrices.

\subsection{Multiplicative non-Hermitian  rank one perturbations}\label{ss:multipl}

Let us now consider {\it multiplicative} rank-one perturbation
\begin{equation}\label{eq:multJ}
\calJ_{k,\times} = \calJ (I_n+i k I_{1\times 1})
\end{equation}
with $k>0$. Here $I_n$ is the $n\times n$ identity matrix. In other words, $\calJ_{k,\times}$ is~\eqref{eq:jacobi} but
with $(1,1)$-entry being replaced by $b_1(1+ik)$ and $(2,1)$-entry replaced by $a_1(1+ik)$.

Let us find the location of the spectrum of $\calJ_{k,\times}$. To this end, we find its characteristic polynomial~$h$:
\begin{equation*}
h(z) = \det(z-\calJ_{k,\times}) = [z-(1+ik)b_1]p_1(z) - a_1^2 (1+ik) p_2(z).
\end{equation*}
Now since $a_1^2 p_2(z) = (z-b_1) p_1(z) - p_0(z)$ by~\eqref{rec.rel}, {one gets}
\begin{equation}\label{char.poly.2}
h(z) = (1+ik) p_0(z) - ik zp_1(z).
\end{equation}

In the next two statements we solve the direct and inverse spectral problems for $\calJ_{k,\times}$. The cases $\det\calJ\ne 0$ and $\det\calJ=0$ shall be treated separately.

\begin{corollary}\label{cor:mult1}
Suppose $\det\calJ\ne 0$. Then the spectrum of $\calJ_{k,\times}$ (\eqref{eq:multJ}  with $k>0$) belongs to
\begin{equation}\label{eq:NEW1}
\left\{ \{z_j\}_{j=1}^n \in (\bbC\setminus\bbR)^n: \sum_{j=1}^n \Arg_{[0,\pi)}
z_j<\frac\pi2 \right\}.
\end{equation}
Conversely, each configuration of points from~\eqref{eq:NEW1} occurs as a spectrum of a unique $\calJ_{k,\times}$ with some $k>0$.

The number of positive and negative eigenvalues of $\calJ$ coincides with the number of eigenvalues of $\calJ_{k,\times}$
in $\bbC_+$ and $\bbC_-$, respectively.
\end{corollary}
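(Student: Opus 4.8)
The plan is to read off the characteristic polynomial of $\calJ_{k,\times}$ from~\eqref{char.poly.2} as an instance of the polynomial appearing in Theorem~\ref{thm:HBextendedNEW}. With $\alpha:=1+ik$, $p:=p_0$, $q:=p_1$, formula~\eqref{char.poly.2} reads exactly $h(z)=\alpha p(z)+(1-\alpha)zq(z)$. All the hypotheses of Theorem~\ref{thm:HBextendedNEW} are in force: $\alpha\in\bbC_+$ because $k>0$; $p_0,p_1$ are monic real polynomials of degrees $n$ and $n-1$; $p_0(0)=(-1)^n\det\calJ\ne0$ by assumption; and the zeros of $p_0$ and $p_1$ strictly interlace, as recalled after~\eqref{rec.rel}. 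Theorem~\ref{thm:HBextendedNEW} then immediately gives that the zeros of $h$, i.e.\ the spectrum of $\calJ_{k,\times}$, lie in $(\bbC\setminus\bbR)^n$ with $\sum_{j=1}^n\Arg_{[0,\pi)}z_j=\Arg\alpha=\arctan k\in(0,\tfrac\pi2)$, hence in~\eqref{eq:NEW1}; and the final assertion on the count of eigenvalues follows from the last sentence of Theorem~\ref{thm:HBextendedNEW}, since $p_0$ is the characteristic polynomial of $\calJ$.

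For the converse I would go backwards through the same dictionary. Given $\{z_j\}_{j=1}^n$ satisfying~\eqref{eq:NEW1}, set $S:=\sum_{j=1}^n\Arg_{[0,\pi)}z_j$; by~\eqref{eq:cotan} each summand lies in $(0,\pi)$, so $S\in(0,\tfrac\pi2)$, and putting $k:=\tan S>0$, $\alpha:=1+ik$ one has $\Arg\alpha=S$, so $\{z_j\}$ satisfies~\eqref{eq:zerosNEW} for this $\alpha$. The converse part of Theorem~\ref{thm:HBextendedNEW} then yields a unique pair of monic real polynomials $p,q$ with strictly interlacing zeros, $\deg p=n$, $\deg q=n-1$, such that $\prod_{j=1}^n(z-z_j)=\alpha p(z)+(1-\alpha)zq(z)$; moreover $p(0)\ne0$ automatically, because $h(0)=\alpha p(0)$ while $h(0)=\prod_{j=1}^n(-z_j)\ne0$. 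Feeding the interlacing zero sets of $p$ and $q$ into the Gray--Wilson/Hald reconstruction recalled after~\eqref{rec.rel} produces a unique Jacobi matrix $\calJ$ with $p_0=p$, $p_1=q$, and with $\det\calJ=(-1)^np(0)\ne0$ so that $\calJ$ is admissible; substituting back into~\eqref{char.poly.2} shows that $\calJ_{k,\times}$ has characteristic polynomial $h$, hence spectrum $\{z_j\}$.

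Finally, uniqueness of the realizing matrix comes from running the forward computation in reverse: if $\calJ'_{k',\times}$ with $k'>0$, $\det\calJ'\ne0$ also has spectrum $\{z_j\}$, the forward direction forces $\arctan k'=S$, hence $k'=k$ and $\alpha'=\alpha$; uniqueness in Theorem~\ref{thm:HBextendedNEW} then forces $p_0'=p$, $p_1'=q$, and uniqueness in the Gray--Wilson/Hald theorem forces $\calJ'=\calJ$, so $\calJ'_{k',\times}=\calJ_{k,\times}$. I expect no serious obstacle here; the only points needing a little care are (a) the identification of the region~\eqref{eq:NEW1} with the disjoint union over $k>0$ of the hypersurfaces $\{\sum_j\Arg_{[0,\pi)}z_j=\arctan k\}$ provided by Theorem~\ref{thm:HBextendedNEW} (using that on $(\bbC\setminus\bbR)^n$ the sum is automatically positive and that $\arctan$ is a bijection $(0,\infty)\to(0,\tfrac\pi2)$), and (b) checking that the nondegeneracy $\det\calJ\ne0$ survives the reconstruction, as observed above.
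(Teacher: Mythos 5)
Your proposal is correct and follows essentially the same route as the paper: identify \eqref{char.poly.2} with \eqref{eq:extHBpolyNEW} via $\alpha=1+ik$, apply Theorem~\ref{thm:HBextendedNEW} in both directions, and invoke the Gray--Wilson/Hald reconstruction of $\calJ$ from the interlacing zero sets. You additionally spell out a few details the paper leaves implicit (that $p(0)\ne 0$ and hence $\det\calJ\ne 0$ survive the reconstruction, and the explicit uniqueness chain $k'=k\Rightarrow p_0'=p_0,\ p_1'=p_1\Rightarrow\calJ'=\calJ$), all of which are accurate.
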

\begin{proof}
The characteristic polynomial~$h$ of $\calJ_{k,\times}$ is~\eqref{char.poly.2}, i.e., we have~\eqref{eq:extHBpolyNEW} with $\alpha = 1+ik$. Clearly, as $k$ varies in $(0,\infty)$ we get
$\Arg\,\alpha \in(0,\tfrac\pi2)$, and each such value of $\Arg\,\alpha$ is achieved exactly once.
The result now follows from Theorem~\ref{thm:HBextendedNEW}.

Conversely, given a configuration of points $\{z_j\}_{j=1}^n$ in~\eqref{eq:NEW1}, first let $\alpha=1+ik$ where $k=\tan\big(\sum \Arg_{[0,\pi)}
z_j\big)>0$, so that $\alpha\in\bbC_+$ with $\Arg \,\alpha = \sum \Arg_{[0,\pi)}
z_j$. By Theorem ~\ref{thm:HBextendedNEW}, the set $\{z_j\}_{j=1}^n$ is the zero set of a unique polynomial~$h$ of the form~\eqref{char.poly.2} with
two polynomials $p_0$ and $p_1$ of degrees $n$ and $n-1$, respectively, with strictly interlacing zeros. As discussed above, $p_0$ and $p_1$ 
uniquely determine $\calJ$ such that $p_0$ and $p_1$ are the characteristic polynomials of $\calJ$ and $\calJ^{(1)}$. The uniqueness and existence 
of $\calJ$ and $k>0$  implies the uniqueness and existence of $\calJ_{k,\times}$ whose spectrum is $\{z_j\}_{j=1}^n$.
\end{proof}

For Jacobi matrices with $\det\calJ = 0$ we instead get the following result.

\begin{corollary}\label{cor:mult2}
Suppose $\det\calJ=0$. Then the spectrum of $\calJ_{k,\times}$ (\eqref{eq:multJ}  with $k>0$) contains a simple eigenvalue $0$, while the remaining eigenvalues belong to
\begin{equation}\label{eq:NEW2}
\left\{ \{z_j\}_{j=1}^{n-1} \in (\bbC\setminus\bbR)^{n-1}: \sum_{j=1}^{n-1} \Arg_{[0,\pi)}
z_j < \arctan k \right\}.
\end{equation}
Conversely, for any $k>0$, each such a configuration of points occurs as a spectrum of a unique~$\calJ_{k,\times}$ with $\det\calJ =0$.

The number of positive and negative eigenvalues of $\calJ$ coincides with the number of eigenvalues of $\calJ_{k,\times}$
in $\bbC_+$ and $\bbC_-$, respectively.
\end{corollary}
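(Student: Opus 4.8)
The plan is to treat the direct and the inverse spectral problems separately, each time dividing out the zero eigenvalue and invoking the generalized Hermite--Biehler theorem, Theorem~\ref{rem:around0}.

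For the direct problem I would start from~\eqref{char.poly.2}: the characteristic polynomial of $\calJ_{k,\times}$ is $h(z) = (1+ik)p_0(z) - ik z p_1(z)$. Since $\det\calJ = 0$ and every eigenvalue of $\calJ$ is simple, $p_0$ has a simple zero at $0$, so $p_0(z) = z\,\widehat p_0(z)$ with $\widehat p_0$ monic of degree $n-1$ and $\widehat p_0(0)\ne 0$. Then $h(z) = z\,\widetilde h(z)$, where $\widetilde h(z) := (1+ik)\widehat p_0(z) - ik p_1(z)$; thus $0$ is an eigenvalue of $\calJ_{k,\times}$, and it is simple because $\re\widetilde h(0) = \widehat p_0(0)\ne 0$ (using $p_1(0)\in\bbR$). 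The remaining eigenvalues are the zeros of $\widetilde h(z) = \alpha\widehat p_0(z) + (1-\alpha)p_1(z)$ with $\alpha = 1+ik\in\bbC_+$ and $\Arg\alpha = \arctan k$. The zeros of $\widehat p_0$ are the nonzero eigenvalues of $\calJ$, those of $p_1$ are the eigenvalues of $\calJ^{(1)}$, and the spectra of $\calJ$ and $\calJ^{(1)}$ strictly interlace; provided $\calJ$ has at least one positive eigenvalue, deleting the eigenvalue $0$ of $\calJ$ puts $\widehat p_0$ and $p_1$ into exactly the broken--interlacing configuration~\eqref{almost.interlacing} (with $n-1$ in place of $n$ and $s$ equal to the number of negative eigenvalues of $\calJ$, so $0\le s\le n-2$). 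Theorem~\ref{rem:around0} then yields~\eqref{eq:NEW2} with $\Arg\alpha = \arctan k$, as well as the equality of counts $\#\{z_j\in\bbC_\pm\} = \#\{\text{positive/negative zeros of }\widehat p_0\} = \#\{\text{positive/negative eigenvalues of }\calJ\}$.

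The one situation this misses is $\calJ$ negative semidefinite (necessarily of corank one). Here I would set $D = \operatorname{diag}(1,-1,1,\dots)$ and $\calJ' := D(-\calJ)D$: this is again a matrix of the form~\eqref{eq:jacobi} (the off--diagonal entries remain $a_j>0$, the diagonal becomes $-b_j$), now positive semidefinite of corank one, and since $D$ fixes $e_1$ and commutes with $e_1e_1^*$ one gets $\calJ'_{k,\times} = D(-\calJ_{k,\times})D$, hence $\spec\calJ'_{k,\times} = -\spec\calJ_{k,\times}$. Applying the previous step to $\calJ'$ (which has $n-1\ge1$ positive eigenvalues) and using $\Arg_{[0,\pi)}(-w) = \Arg_{[0,\pi)}w$ for $w\notin\bbR$ transfers the conclusion back to $\calJ_{k,\times}$: a simple eigenvalue $0$, the remaining $n-1$ eigenvalues in $\bbC_-$ and satisfying~\eqref{eq:NEW2}, with the count matching the $n-1$ negative eigenvalues of $\calJ$.

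For the inverse problem, fix $k>0$ and a configuration $\{z_j\}_{j=1}^{n-1}$ satisfying~\eqref{eq:NEW2}, and put $\alpha = 1+ik$, so $\Arg\alpha = \arctan k$ and~\eqref{eq:zerosMORE} holds. By the converse half of Theorem~\ref{rem:around0} (in the boundary case where all $z_j$ lie in $\bbC_-$, via the reflection device above) there is a unique pair of monic real polynomials $\widehat p_0,p_1$ of degree $n-1$ with strictly interlacing zeros and $\prod_{j=1}^{n-1}(z-z_j) = \alpha\widehat p_0(z) + (1-\alpha)p_1(z)$. Setting $p_0(z):=z\,\widehat p_0(z)$, the point $t=0$ falls in the gap of the zero set of $\widehat p_0$ containing $(\mu_s,\mu_{s+1})$ by~\eqref{almost.interlacing}, so inserting it makes the zeros of $p_0$ and $p_1$ strictly interlace; the Gray--Wilson/Hald theorem then produces a unique Jacobi matrix $\calJ$ of the form~\eqref{eq:jacobi} whose characteristic polynomials are $p_0$ for $\calJ$ and $p_1$ for $\calJ^{(1)}$, so that $\det\calJ = p_0(0)=0$, while~\eqref{char.poly.2} shows the characteristic polynomial of $\calJ_{k,\times}$ equals $z\,\widetilde h(z)$ with zero set $\{0\}\cup\{z_j\}_{j=1}^{n-1}$; uniqueness propagates down the chain. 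The routine steps are the linear coefficient--matching that identifies $\widehat p_0,p_1$ (exactly as in the proof of Theorem~\ref{rem:around0}) and the interlacing bookkeeping when $0$ is removed from or inserted into a zero list; the one genuine obstacle is the negative semidefinite case, where $\widehat p_0$ and $p_1$ strictly interlace in the ordinary sense and $p_1$ has no positive zero, so Theorem~\ref{rem:around0} does not apply verbatim --- one circumvents this either through the reflection $\calJ\mapsto -\calJ$ above or by proving the mirror analogue of Theorem~\ref{rem:around0}, rerunning its hodograph argument with $\bbC_+$ and $\bbC_-$ interchanged.
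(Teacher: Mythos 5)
Your proposal is correct and follows the same route as the paper: factor the characteristic polynomial as $h(z)=z\bigl[(1+ik)\widehat{p}_0(z)-ikp_1(z)\bigr]$ and apply Theorem~\ref{rem:around0} with $\alpha=1+ik$ to the degree-$(n-1)$ bracket, then reverse the steps (via the Gray--Wilson/Hald uniqueness) for the inverse problem. The one substantive difference is that you explicitly isolate the case where $\calJ$ is negative semidefinite, i.e.\ all $n-1$ nonzero eigenvalues of $\calJ$ are negative: there the pair $(\widehat{p}_0,p_1)$ falls into the ``all zeros negative'' configuration, which is the value $s=n-1$ in~\eqref{almost.interlacing.2} but corresponds to $s=N$ for polynomials of degree $N=n-1$ and hence lies outside the stated range $0\le s\le N-1$ of hypothesis~\eqref{almost.interlacing} in Theorem~\ref{rem:around0} (whose Remark only extends the notation down to $s=0$, the all-positive case). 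The paper's proof quietly includes this case in its range of $s$ and cites Theorem~\ref{rem:around0} anyway; your reflection $\calJ\mapsto D(-\calJ)D$ with $D=\operatorname{diag}(1,-1,1,\dots)$, which fixes $e_1$, preserves the Jacobi form, negates the spectrum, and leaves $\Arg_{[0,\pi)}$ and condition~\eqref{eq:NEW2} invariant, is a clean and correct way to close that gap in both directions of the argument. So your write-up is essentially the paper's proof plus a legitimate patch of an edge case the paper glosses over.
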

\begin{remark}
Unlike the situation in Corollary~\ref{cor:mult1}, if $k>0$ is arbitrary then the uniqueness for the inverse 
spectral problem for $\calJ_{k,\times}$ does not hold. Indeed, given a set of non-real numbers $\{z_j\}_{j=1}^{n-1}$, there are infinitely many 
numbers $k>0$ satisfying~\eqref{eq:NEW2}, and consequently, there are infinitely many matrices $\calJ_{k,\times}$ with the 
same spectrum (they each have $\det \calJ=0$ and distinct $k>0$).
\end{remark}
\begin{proof}
Since $\det\calJ=0$ we have  $p_0(z)=z\widehat{p}_0(z)$,
so that the characteristic polynomial of $\calJ_{k,\times}$ (see~\eqref{char.poly.2}) becomes
\begin{equation*}
h(z) =z[(1+ik) \widehat{p}_0(z) - ik p_1(z)].
\end{equation*}
So if
\begin{equation*}
\widehat{p}_0(z)=\prod\limits_{j=1}^{n-1}(z-\lambda_j)\qquad\text{and}\qquad p_1(z)=\prod\limits_{j=1}^{n-1}(z-\mu_j),
\end{equation*}
then the zeros $\{\lambda_j\}_{j=1}^{n-1}$ of $\widehat{p}_0$ and $\{\mu_j\}_{j=1}^{n-1}$ of $p_1$ satisfy the inequalities
\begin{equation}\label{almost.interlacing.2}
\lambda_1 < \mu_1 < \ldots \lambda_s < \mu_s<0 < \mu_{s+1} < \lambda_{s+1} < \ldots < \mu_{n-1} <\lambda_{n-1}
\end{equation}
for some integer $s$, $0\leqslant s\leqslant n-1$. As $\Arg(1+ik) = \arctan k$, by Theorem~\ref{rem:around0} we get that the $n-1$ nonzero eigenvalues of $\calJ_{k,\times}$ belong to~\eqref{eq:NEW2}.

Conversely, given $k>0$ and a configuration of points $\{z_j\}_{j=1}^{n-1}$ from ~\eqref{eq:NEW2} one applies Theorem~\ref{rem:around0} (with $\alpha:=1+ik$) to get two polynomials $p$ and $r$ of degree $n-1$ whose zeros $\{\lambda_j\}_{j=1}^{n-1}$ and $\{\mu_j\}_{j=1}^{n-1}$ satisfy ~\eqref{almost.interlacing.2}.
Then $z p(z)$ and $r(z)$ have strictly interlacing zeros. This uniquely determines $\calJ$ such that $zp(z)$ and $r(z)$ are the characteristic polynomials of $\calJ$ and $\calJ^{(1)}$.
\end{proof}

\smallskip

One can use this to study eigenvalues of multiplicative rank 1 perturbations of generic Hermitian matrices:
\begin{equation}\label{eq:multH}
	H (I + i \Gamma)
\end{equation}
where $H=H^*$, $\Gamma=\Gamma^*$, $\rank \, \Gamma = 1$:  assuming that a non-zero vector from $\ran \,\Gamma$ is cyclic for~$H$, the
matrix~\eqref{eq:multH} can be reduced to~\eqref{eq:multJ} via a unitary conjugation.

See~\cite{AlpK2} for an application of this to multiplicative perturbations of random matrices.

\subsection{Additive non-Hermitian rank-two perturbations}\label{ss:ranktwo}
Consider now an additive 
perturbation of $\calJ$ 
of the following type
\begin{equation}\label{eq:jacobiRank2}
\calJ_{l,m} :=
\begin{pmatrix}
		b_1+il&a_1&0& &\\
		a_1+im&b_2&a_2&\ddots &\\
		0&a_2&b_3&\ddots & 0 \\
		&\ddots&\ddots&\ddots & a_{n-1} \\
		& & 0 & a_{n-1} & b_n
\end{pmatrix},
\end{equation}
with $m>0$ and $l\in\bbR$. A similar calculation as in the previous section shows that the characteristic polynomial of $\calJ_{l,m}$ is
\begin{equation}\label{char.poly.23}
h(z) = \left(1+\dfrac{im}{a_1} \right)p(z) -\dfrac{im}{a_1}  \left(z-\dfrac{mb_1-la_1}{m} \right)q(z).
\end{equation}
So it can be represented in the form
$$
\alpha p(z) + (1-\alpha) (z-\xi)q(z)
$$
with $\alpha = 1+\tfrac{im}{a_1}$ and
$\xi = b_1-\tfrac{la_1}{m}$. 

Thus, as above we can solve the direct and inverse spectral problems for $\calJ_{l,m}$. The cases $\det(\calJ-\xi I)\ne 0$ and $\det(\calJ-\xi I)=0$ must be treated separately.
\begin{theorem}\label{thm:2add1}
Given the numbers $m>0$ and $l\in\mathbb{R}$, the spectrum of the matrix $\calJ_{l,m}$ defined in~\eqref{eq:jacobiRank2} belongs to
\begin{equation}\label{eq:NEW11}
\left\{ \{z_j\}_{j=1}^n \in (\bbC\setminus\bbR)^n: \sum_{j=1}^n \Arg_{[0,\pi)}
(z_j-\xi)<\frac\pi2 \right\}
\end{equation}
with $\xi = b_1-\tfrac{la_1}{m}$ provided $\det(\calJ-\xi I)\ne 0$ where $\calJ=\calJ_{0,0}$.

\vspace{1mm}

Conversely, given a number $\xi\in\mathbb{R}$, each configuration of points from~\eqref{eq:NEW11} occurs as a spectrum 
of a unique matrix $\calJ_{l,m}$ with some $m>0$ and $l\in\mathbb{R}$.

The number of eigenvalues of $\calJ$ greater than $\xi$ and less than $\xi$  coincides with the number of eigenvalues of $\calJ_{l,m}$
in $\bbC_+$ and $\bbC_-$, respectively.
\end{theorem}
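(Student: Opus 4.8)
The plan is to reduce Theorem~\ref{thm:2add1} to Theorem~\ref{thm:HBextendedNEW} (more precisely to its shifted version in Remark~\ref{rem:shifted}) via the explicit formula~\eqref{char.poly.23} for the characteristic polynomial of $\calJ_{l,m}$. First I would verify~\eqref{char.poly.23} by the same Laplace-expansion computation sketched for $\calJ_{k,\times}$: expanding $\det(z-\calJ_{l,m})$ along the first row and using~\eqref{rec.rel} to eliminate $a_1^2 p_2(z)$ in favour of $p_0(z)$ and $p_1(z)$. Here $p=p_0$ and $q=p_1$ are the characteristic polynomials of $\calJ$ and $\calJ^{(1)}$, so their zeros are real, simple, and strictly interlacing. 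Writing $\alpha = 1+\tfrac{im}{a_1}$ and $\xi = b_1 - \tfrac{la_1}{m}$, formula~\eqref{char.poly.23} is exactly $h(z) = \alpha p(z) + (1-\alpha)(z-\xi)q(z)$. Since $m>0$ and $a_1>0$ we have $\im\alpha = \tfrac{m}{a_1}>0$, i.e.\ $\alpha\in\bbC_+$, and $\Arg\,\alpha = \arctan\tfrac{m}{a_1}\in(0,\tfrac\pi2)$.

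For the direct problem I would now apply the shifted Theorem~\ref{thm:HBextendedNEW} (Remark~\ref{rem:shifted}) with this $\xi$: provided $p(0)\ne 0$ shifted to $p(\xi)\ne 0$, i.e.\ $\det(\calJ-\xi I)\ne 0$, the hypotheses are met (strict interlacing of zeros of $p$ and $q$, $p(\xi)\ne 0$, $\alpha\in\bbC_+$), so the spectrum of $\calJ_{l,m}$ lies in $\{\{z_j\}: \sum_{j}\Arg_{[0,\pi)}(z_j-\xi) = \Arg\,\alpha\}$. Since $\Arg\,\alpha\in(0,\tfrac\pi2)$ this is contained in~\eqref{eq:NEW11}. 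The last sentence of the theorem — that the number of eigenvalues of $\calJ$ greater than (less than) $\xi$ equals the number of $z_j$ in $\bbC_+$ (in $\bbC_-$) — is just the final clause of Theorem~\ref{thm:HBextendedNEW} applied to the shifted polynomials $p(z+\xi)$, $q(z+\xi)$, since zeros of $p$ exceeding $\xi$ correspond to positive zeros of $p(\cdot+\xi)$.

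For the inverse problem, fix $\xi\in\bbR$ and a configuration $\{z_j\}_{j=1}^n$ in~\eqref{eq:NEW11}. Put $t := \sum_j \Arg_{[0,\pi)}(z_j-\xi)\in(0,\tfrac\pi2)$ and set $\alpha := 1 + i\tan t\in\bbC_+$, so that $\Arg\,\alpha = t$; this mimics the device in the proof of Corollary~\ref{cor:mult1}. Then $h(z)=\prod_j(z-z_j)$ satisfies the shifted version of~\eqref{eq:zerosNEW}, so by (the shifted) Theorem~\ref{thm:HBextendedNEW} there is a unique pair of monic real polynomials $p,q$ with strictly interlacing zeros and $\deg p=n$, $\deg q = n-1$, such that $h(z) = \alpha p(z) + (1-\alpha)(z-\xi)q(z)$. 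By the Gray--Wilson/Hald theorem the interlacing pair $(p,q)$ uniquely determines a Jacobi matrix $\calJ$ having $p,q$ as the characteristic polynomials of $\calJ,\calJ^{(1)}$. Finally I would recover $m$ and $l$ from $\alpha$ and $\xi$: from $\im\alpha = \tfrac{m}{a_1} = \tan t$ we get $m = a_1\tan t > 0$, and from $\xi = b_1 - \tfrac{la_1}{m}$ we get $l = \tfrac{m(b_1-\xi)}{a_1} = (b_1-\xi)\tan t \in\bbR$, uniquely. This produces the desired $\calJ_{l,m}$, and uniqueness of each step gives uniqueness of $(l,m)$.

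The only genuine subtlety — the ``hard part'' — is that Theorem~\ref{thm:2add1} does not require $\det(\calJ-\xi I)\ne 0$ for the converse direction, yet the reconstructed $\calJ$ might accidentally have $\xi$ as an eigenvalue. But if $p(\xi)=0$, then $h(\xi) = (1-\alpha)\cdot 0 \cdot q(\xi) = 0$, forcing some $z_j = \xi\in\bbR$, contradicting $\{z_j\}\in(\bbC\setminus\bbR)^n$ in~\eqref{eq:NEW11}; so automatically $\det(\calJ-\xi I)\ne 0$ and the reconstruction is consistent. A secondary point to check is the degenerate boundary case where some $z_j-\xi$ would force $t$ to hit $0$ or $\tfrac\pi2$, but the strict inequality in~\eqref{eq:NEW11} together with $z_j\notin\bbR$ keeps $t\in(0,\tfrac\pi2)$ open, so $\tan t$ is well defined and positive. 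Everything else is a routine unwinding of Theorem~\ref{thm:HBextendedNEW} and Remark~\ref{rem:shifted}; the case $\det(\calJ-\xi I)=0$ is deferred (as the text says) to a separate statement handled via Theorem~\ref{rem:around0} exactly as Corollary~\ref{cor:mult2} handles $\det\calJ=0$.
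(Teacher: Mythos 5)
Your proposal is correct and follows essentially the same route as the paper: verify the factorization~\eqref{char.poly.23}, apply Theorem~\ref{thm:HBextendedNEW} via Remark~\ref{rem:shifted} with $\alpha=1+\tfrac{im}{a_1}$ for the direct part, and for the converse build $\alpha=1+i\tan A$ from the argument sum, recover $(p,q)$ uniquely, invoke Gray--Wilson/Hald to get $\calJ$, and read off $m=a_1\tan A$ and $l=\tfrac{m}{a_1}(b_1-\xi)$. Your extra observation that $p(\xi)=0$ would force a real zero of $h$, so the reconstructed $\calJ$ automatically satisfies $\det(\calJ-\xi I)\ne 0$, is a small consistency check the paper leaves implicit but is entirely in the same spirit.
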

\begin{remark}
    Notice that if one does not fix $\xi$, then there is no hope of uniqueness as is clear from the parameter counting.
\end{remark}
\begin{proof}
The characteristic polynomial~$h$ of $\calJ_{l,m}$ is~\eqref{char.poly.23} with strictly interlacing polynomials $p$ and $q$, i.e., we are in the situation of 
Remark~\ref{rem:shifted} with $\alpha = 1+\tfrac{im}{a_1}$. Clearly, for a fixed $\xi$, as $m$ varies in $(0,\infty)$ we get
$\Arg\,\alpha=\arctan\dfrac{m}{a_1} \in\left(0,\dfrac\pi2\right)$, and each such value of $\Arg\,\alpha$ is achieved exactly once.
The result now follows from Theorem~\ref{thm:HBextendedNEW} and Remark~\ref{rem:shifted} (notice that $p(z)$ and $(z-\xi)q(z)$ has no common zero by our assumption).

\vspace{2mm}

Conversely, given $\xi\in\mathbb{R}$ and a configuration of points $\{z_j\}_{j=1}^n$ in~\eqref{eq:NEW11}, let 
$\alpha=1+i\tan A$ where $A=\sum\limits_{j=1}^n \Arg_{[0,\pi)} (z_j-\xi)\in\left(0,\dfrac\pi2\right)$, so that 
$\alpha\in\bbC_+$ with $\Arg \,\alpha = \sum\limits_{j=1}^n \Arg_{[0,\pi)} (z_j-\xi)$. By Theorem~\ref{thm:HBextendedNEW} and Remark~\ref{rem:shifted}, 
the set $\{z_j\}_{j=1}^n$ is the zero set of a unique polynomial~$h$ of the form~\eqref{char.poly.23} 
with two polynomials $p$ and $q$ of degrees $n$ and $n-1$, respectively, with strictly interlacing zeros. As 
discussed above, $p$ and $q$ uniquely determine $\calJ$ such that $p$ and $p$ are the characteristic 
polynomials of $\calJ$ and $\calJ^{(1)}$. In particular, they uniquely determine the numbers $a_1>0$ and $b_1\in\mathbb{R}$,
so that we define
\begin{equation}\label{coeff.l.m}
m:=a_1\cdot\tan A\qquad\text{and}\qquad l:=\dfrac{m}{a_1}(b_1-\xi).
\end{equation}
The uniqueness and existence of $\calJ$, $m>0$, and $l\in\mathbb{R}$ implies the uniqueness 
and existence of $\calJ_{l,m}$ whose spectrum is $\{z_j\}_{j=1}^n$.
\end{proof}

For Jacobi matrices with $\det(\calJ-\xi I) = 0$ (with $\xi = b_1-\tfrac{la_1}{m}$ with given $m>0$ and $l\in\mathbb{R}$) we instead get the following result.

\begin{theorem}\label{thm:2add2}
Given the numbers $m>0$ and $l\in\mathbb{R}$, suppose $\det(\calJ-\xi)=0$ where $\xi = b_1-\tfrac{la_1}{m}$. Then the spectrum 
of $\calJ_{l,m}$ defined in~\eqref{eq:jacobiRank2} contains a simple real eigenvalue $\xi$, while the remaining eigenvalues belong to
\begin{equation}\label{eq:NEW22}
\left\{ \{z_j\}_{j=1}^{n-1} \in (\bbC\setminus\bbR)^{n-1}: \sum_{j=1}^{n-1} \Arg_{[0,\pi)}
(z_j-\xi) < \arctan A \right\}
\end{equation}
where $A=\dfrac{m}{a_1}>0$.

\vspace{2mm}

Conversely, given $\xi\in\mathbb{R}$, for any $A>0$, each such a configuration of points occurs as a spectrum of a 
unique~$\calJ_{l,m}$ with $\det(\calJ-\xi) =0$, and $m>0$, $l\in\bbR$.

The number of eigenvalues of $\calJ$ greater than $\xi$  and less than $\xi$  coincides with the number of eigenvalues of $\calJ_{l,m}$
in $\bbC_+$ and $\bbC_-$, respectively.
\end{theorem}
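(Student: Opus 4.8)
The plan is to reduce everything to the already-established Theorem~\ref{rem:around0} via the shift machinery of Remark~\ref{rem:shifted} and the singular case analysis modelled on Corollary~\ref{cor:mult2}. First I would observe that the characteristic polynomial of $\calJ_{l,m}$ is~\eqref{char.poly.23}, which by the choice $\alpha=1+\tfrac{im}{a_1}$ and $\xi=b_1-\tfrac{la_1}{m}$ takes the form $h(z)=\alpha p(z)+(1-\alpha)(z-\xi)q(z)$, where $p=p_0$ and $q=p_1$ are the characteristic polynomials of $\calJ$ and $\calJ^{(1)}$, and hence have real, simple, strictly interlacing zeros. The hypothesis $\det(\calJ-\xi I)=0$ says $\xi$ is a zero of $p$, so $p(z)=(z-\xi)\widehat{p}(z)$ with $\widehat{p}$ monic of degree $n-1$. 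Substituting, $h(z)=(z-\xi)\big[\alpha\widehat{p}(z)+(1-\alpha)q(z)\big]$, so $\xi$ is an eigenvalue of $\calJ_{l,m}$; it is simple because the zeros of $\widehat p$ and $q$ still interlace (in fact $q$ has exactly one zero strictly between $\xi$ and each neighboring zero of $p$, so $q(\xi)\ne 0$, hence $\widehat p(\xi)\cdot$ bracket $\ne 0$ at $z=\xi$ once one checks the bracket does not vanish at $\xi$ — this follows since $\alpha\widehat p(\xi)+(1-\alpha)q(\xi)=0$ would force, taking imaginary parts with $\im\alpha\ne 0$, $\widehat p(\xi)=q(\xi)$, and then $\widehat p(\xi)=q(\xi)=0$, contradicting that $q(\xi)\ne 0$).

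Next I would apply the shifted form of Theorem~\ref{rem:around0} (Remark~\ref{rem:shifted}, with the roles of $0$ and $\xi$ swapped): the reduced polynomial $\widehat h(z):=\alpha\widehat p(z)+(1-\alpha)q(z)$ is a degree-$(n-1)$ monic polynomial built from two monic real polynomials $\widehat p$ and $q$ of the same degree whose zeros interlace with exactly one broken location around $\xi$, i.e.\ satisfy the shifted analogue of~\eqref{almost.interlacing}. The interlacing of $\widehat p$ and $q$ with a single break around $\xi$ is exactly the content of the shifted~\eqref{almost.interlacing.2}: between consecutive zeros of $p$ there is one zero of $q$, and removing the zero $\xi$ of $p$ merges two such intervals into one interval containing two zeros of $q$, straddling $\xi$. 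Theorem~\ref{rem:around0} (shifted) then gives that the zeros $\{z_j\}_{j=1}^{n-1}$ of $\widehat h$ lie in $(\bbC\setminus\bbR)^{n-1}$ with $\sum_{j=1}^{n-1}\Arg_{[0,\pi)}(z_j-\xi)<\Arg\alpha=\arctan\tfrac{m}{a_1}=\arctan A$, which is~\eqref{eq:NEW22}; and the count of $z_j$ in $\bbC_\pm$ equals the number of zeros of $\widehat p$ — equivalently of $p$ excluding $\xi$ — greater/less than $\xi$.

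For the converse, given $\xi\in\bbR$, $A>0$, and a configuration $\{z_j\}_{j=1}^{n-1}$ satisfying~\eqref{eq:NEW22}, I would set $\alpha:=1+iA\in\bbC_+$ so that $\Arg\alpha=\arctan A$ and the strict inequality~\eqref{eq:NEW22} is precisely the shifted~\eqref{eq:zerosMORE} for $\widehat h(z):=\prod_{j=1}^{n-1}(z-z_j)$. The shifted Theorem~\ref{rem:around0} then produces a unique pair of monic real degree-$(n-1)$ polynomials $\widehat p$ and $q$, with zeros satisfying the shifted~\eqref{almost.interlacing} around $\xi$, such that $\widehat h=\alpha\widehat p+(1-\alpha)q$. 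Putting $p(z):=(z-\xi)\widehat p(z)$, the zeros of $p$ and $q$ strictly interlace (the shifted broken interlacing of $\widehat p,q$ around $\xi$ becomes genuine interlacing of $p,q$ after inserting the zero $\xi$), and by Gray--Wilson/Hald this pair is the characteristic-polynomial pair of a unique Jacobi matrix $\calJ$ with data $a_j>0$, $b_k\in\bbR$; in particular $a_1,b_1$ are determined, and $\det(\calJ-\xi I)=p(\xi)=0$. I then recover $m:=a_1 A>0$ and $l:=\tfrac{m}{a_1}(b_1-\xi)=A(b_1-\xi)\in\bbR$ as in~\eqref{coeff.l.m}, so that $\xi=b_1-\tfrac{la_1}{m}$ and $\alpha=1+\tfrac{im}{a_1}$; then~\eqref{char.poly.23} shows the characteristic polynomial of $\calJ_{l,m}$ is $(z-\xi)\widehat h(z)=\prod_{j=1}^{n-1}(z-z_j)\cdot(z-\xi)$, as required, and uniqueness of $\calJ$, $m$, $l$ gives uniqueness of $\calJ_{l,m}$.

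The main obstacle I anticipate is purely bookkeeping: carefully checking that inserting (resp.\ deleting) the zero $\xi$ converts the shifted broken-interlacing condition~\eqref{almost.interlacing} for $(\widehat p,q)$ into strict interlacing for $(p,q)$ and back, uniformly over the edge cases $s=0$ and $s=n-1$ (i.e.\ $\xi$ below or above all other zeros of $p$), together with verifying that $\widehat h$ has no real zero — equivalently that $\widehat p$ and $q$ have no common real zero — which again follows from the fact that $p$ and $q$ have no common zero, now combined with $q(\xi)\ne 0$. None of these steps is deep, but the shift of origin and the degree drop must be tracked consistently with the indexing in~\eqref{almost.interlacing} and~\eqref{almost.interlacing.2}.
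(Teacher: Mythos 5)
Your proposal follows essentially the same route as the paper's proof: factor out $(z-\xi)$ using $p(\xi)=0$, apply the shifted version of Theorem~\ref{rem:around0} (via Remark~\ref{rem:shifted}) with $\alpha=1+i\tfrac{m}{a_1}$ to the reduced degree-$(n-1)$ polynomial, and for the converse reconstruct $\calJ$ from the interlacing pair $(z-\xi)\widehat p$ and $q$ via Gray--Wilson/Hald and then recover $m,l$ from~\eqref{coeff.l.m}. Your explicit verification that the bracket does not vanish at $z=\xi$ (hence that $\xi$ is a simple eigenvalue) is a small welcome addition the paper leaves implicit, but the argument is otherwise the same.
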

%
%
\begin{proof}
Since $\det(\calJ-\xi)=0$ we have  $p(z)=(z-\xi)\widehat{p}(z)$,
so that the characteristic polynomial of $\calJ_{k,\times}$ (see~\eqref{char.poly.2}) becomes
\begin{equation*}
h(z) =(z-\xi)\left[\left(1+i\dfrac{m}{a_1}\right) \widehat{p}(z) - i\dfrac{m}{a_1} q(z)\right].
\end{equation*}
Zeros of $p$ and $q$ strictly interlace. So if
\begin{equation*}
\widehat{p}(z)=\prod\limits_{j=1}^{n-1}(z-\lambda_j)\qquad\text{and}\qquad q(z)=\prod\limits_{j=1}^{n-1}(z-\mu_j),
\end{equation*}
then the zeros $\{\lambda_j\}_{j=1}^{n-1}$ of $\widehat{p}$ and $\{\mu_j\}_{j=1}^{n-1}$ of $q$ satisfy the inequalities
\begin{equation}\label{almost.interlacing.22}
\lambda_1 < \mu_1 < \ldots \lambda_s < \mu_s<\xi < \mu_{s+1} < \lambda_{s+1} < \ldots < \mu_{n-1} <\lambda_{n-1}
\end{equation}
for some integer $s$, $0\leqslant s\leqslant n-1$. As $\Arg\left(1+i\dfrac{m}{a_1}\right) = \arctan \dfrac{m}{a_1}$, by 
Theorem~\ref{rem:around0} and Remark~\ref{rem:shifted}, we get that the $n-1$ nonzero eigenvalues of $\calJ_{l,m}$ belong 
to~\eqref{eq:NEW22} with $A=\dfrac{m}{a_1}$.

\vspace{2mm}

Conversely, given $\xi\in\mathbb{R}$, $A>0$, and a configuration of points $\{z_j\}_{j=1}^{n-1}$ from~\eqref{eq:NEW22} one applies 
Theorem~\ref{rem:around0} and Remark~\ref{rem:shifted} (with $\alpha:=1+iA$) to get two polynomials $\widehat{p}$ and $q$ of degree $n-1$ 
whose zeros $\{\lambda_j\}_{j=1}^{n-1}$ and $\{\mu_j\}_{j=1}^{n-1}$ satisfy~\eqref{almost.interlacing.22}.

Then $(z-\xi)\widehat{p}(z)$ and $q(z)$ have strictly interlacing zeros. This uniquely determines $\calJ$ such that 
$(z-\xi)\widehat{p}(z)$ and $q(z)$ are the characteristic polynomials of $\calJ$ and $\calJ^{(1)}$. Now having the 
matrix~$\calJ$ one can uniquely determine the numbers $m>0$ and $l\in\mathbb{R}$ by the formul\ae~\eqref{coeff.l.m}.
\end{proof}


\begin{remark}
    In a similar manner one can also solve a spectral problem for rank two additive perturbations where both $(2,1)$ and $(1,2)$-entries of~\eqref{eq:jacobiRank2} are equal to $a_1+im$. Indeed, using a unitary conjugation one can reduce such matrices to the form ~\eqref{eq:jacobiRank2}. We leave the details as an exercise to the reader.
\end{remark}

\section{Acknowledgements}
The work of M.\,Tyaglov was partially supported by National Natural Science Foundation of China under grant no.~11871336.


\begin{thebibliography}{}
%
\bibitem{AlpK21}
\textsc{G.\,Alpan and R.\,Kozhan\/}, Hermitian and non-Hermitian perturbations of chiral Gaussian $\beta$-ensembles, \textit{J. Math. Phys.}, \textbf{63}, 043505 (2022).
%
\bibitem{AlpK2}
\textsc{G.\,Alpan and R.\,Kozhan\/}, Multiplicative perturbations of classical $\beta$-ensembles, in preparation.
%
\bibitem{AT}
\textsc{Y.\,Arlinski\u{i} and E.\,Tsekanovski\u{i}\/}, Non-self-adjoint Jacobi matrices with a rank-one imaginary part, \textit{J. Funct. Anal.}, \textbf{241}, no.~2, 2006, pp.~383--438.
%
\bibitem{Barkovsky}
{\sc Yu.\,S.\,Barkovsky\/}, Lectures on the Routh--Hurwitz problem, \textit{arXiv.org: math/0802.1805}, 2008.
%
\bibitem{Biehler}
\textsc{M.\,Biehler\/}, Sur une classe d'\'equations alg\'ebriques dont toutes les racines sont r\'eelles, \textit{J. Reine Angew. Math.},~\textbf{87}, 1879, pp.~350--352.
%
\bibitem{deBoor.Golub}
\textsc{C.\,de Boor Gray and G.H.\,Golub},The Numerically Stable Reconstruction
of a Jacob1 Matrix from Spectral Data, \textit{Linear Algebra Appl.}, \textbf{21}, 1978, pp.~245--260.
%
%
\bibitem{GrayWilson}
\textsc{L.J.\,Gray and D.G.\,Wilson}, Construction of a Jacobi matrix from spectral data,
\textit{Linear Algebra Appl.}, \textbf{14}, 1976, pp.~131--134.
%
\bibitem{KreinGantmacher}
{\sc F.\,Gantmacher, M.\,Krein\/}, \textit{Oscillation matrices and
kernels and small vibrations of mechanical systems}, AMS Chelsea
Publishing, Providence, RI, revised edition, 2002. Translation
based on the 1941 Russian original. Edited and with a preface by
Alex Eremenko.
%
\bibitem{Hald}
\textsc{O.\,Hald\/}, {Inverse eigenvalue problems for  Jacobi matrices}, {\it Linear Algebra Appl.},~\textbf{14}, 1976, pp.~63--85.
%
\bibitem{Hermite}
\textsc{C.\,Hermite\/}, Sur l'indice des fractions rationelles, \textit{Bull. Soc. Math. France}, \textbf{7}, 1879, pp.~128--131.
%
\bibitem{Hochstadt}
\textsc{H.\,Hochstadt\/}, On the construction of a Jacobi matrix from spectral data, \textit{Linear Algebra Appl.}, \textbf{8}, 1974, pp.~435--446.
%
\bibitem{Holtz_Tyaglov}
\textsc{O.\,Holtz and M.\,Tyaglov\/},
\newblock Structured matrices, continued fractions, and root localization of polynomials,
\newblock \textit{SIAM Rev.}, \textbf{54}, no.~3, 2012,pp.~421--509.
%
\bibitem{Lanczos}
\textsc{C.\,Lanczos\/}, An iteration method for the solution of the eigenvalue problem of linear differential and integral operators, \textit{Journal of Research of the National Bureau of Standards}, 45 (4), 1950, pp.~255--282.
%
%
\bibitem{K17}
\textsc{R.\,Kozhan\/}, Rank one non-Hermitian perturbations of Hermitian $\beta$-ensembles of random matrices, \textit{J. Stat. Phys.}, \textbf{168}, no.~1, 2017, pp.~92--108.
%
\bibitem{ORW}
\textsc{S.\,O'Rourke and P.M.\,Wood\/}, Spectra of nearly Hermitian random matrices, \textit{Ann. Inst. H. Poincar\'e Probab. Statist.}, \textbf{53}, no. 3, pp.~1241--1279.
%
\bibitem{Postnikov}
{\sc M.\,M.\,Postnikov\/}, \textit{Stable polynomials} (in Russian), 2nd ed., Moscow, 2004.
%
\bibitem{Rahman.Sch}
{\sc Q.I.\,Rahman and G.\,Schmeisser\/}, \textit{Analytic theory of polynomials}, London Math. Soc.
Monogr. (N.S.) \textbf{26}, The Clarendon Press, Oxford University Press, Oxford, 2002.
%
\bibitem{Wall}
{\sc H.S.\,Wall\/}, Polynomials whose zeros have negative real parts, \textit{Amer. Math. Mothly}, \textbf{52}, no.~6, 1945, pp.~308--322.
%
\bibitem{Wall.b}
{\sc H.S.\,Wall\/}, {\it Analytic Theory of Continued Fractions}, Chelsea, Bronx NY, 1973.
%
\end{thebibliography}
\end{document}